\tikzset{->-/.style={decoration={
  markings,
  mark=at position #1 with {\arrow{>}}},postaction={decorate}}}
  \tikzset{middlearrow/.style={
        decoration={markings,
            mark= at position 0.55 with {\arrow{#1}} ,
        },
        postaction={decorate}
    }
}
\newcommand{\nn}{\nonumber}
\newcommand{\p}{\partial}
\renewcommand*{\dot}[1]{%
\accentset{\mbox{\large\bfseries .}}{#1}}
\declaretheoremstyle[headfont=\kpfonts]{normalhead}
\newtheorem{theorem}{Theorem}[section]
\theoremstyle{definition}
\def\mathcolor#1#{\@mathcolor{#1}}
\def\@mathcolor#1#2#3{%
\protect\leavevmode
\begingroup
\color#1{#2}#3%
\endgroup
}
\numberwithin{figure}{section}
\numberwithin{equation}{section}
\newcommand{\appendarrow}[3]{
\tikzset{
    middlearrow/.style args={#1}{
        decoration={
            markings,
            mark= at position 0.5 with
                {
                    \coordinate (exy1) at (#1/-6.0,0pt);
                    \coordinate (exy2) at (-0.5*#1,#1/3.0);  % 3.0 adjustable 
                    \coordinate (exy3) at (0.5*#1,0pt);
                    \coordinate (exy4) at (-0.5*#1,#1/-3.0); % 3.0 adjustable 
                },
        },
        postaction=decorate
    },
    middlearrow/.default= 3pt
}
\draw[middlearrow=#1] #3;
\fill[#2] (exy1) -- (exy2) -- (exy3) -- (exy4) -- cycle;
}
\begin{document}
\title{
The Robin and Neumann Problems for the Nonlinear Schr\"odinger Equation on the Half-Plane 
}
\author{A. Alexandrou Himonas \& Dionyssios Mantzavinos$^*$}
\begin{abstract}
This work studies the initial-boundary value problem 
of the two-dimensional nonlinear Schr\"odinger equation 
on the half-plane with initial data in Sobolev spaces 
and Neumann or Robin boundary data in appropriate 
Bourgain spaces.
It establishes well-posedness in the sense of Hadamard 
by utilizing the explicit solution formula  for the forced linear  initial-boundary value problem obtained via Fokas's unified transform,  and a contraction mapping argument.
\end{abstract}
\date{April 27, 2022. $^*$\!\textit{Corresponding author}: mantzavinos@ku.edu}
\subjclass[2020]{Primary: 35Q55, 35G16, 35G31}
\keywords{2D nonlinear Schr\"odinger equation, 
initial-boundary value problem with Neumann and Robin boundary conditions, unified transform method of Fokas,  
well-posedness in Sobolev spaces,
Bourgain spaces, 
linear space-time estimates}
\maketitle
\markboth
{The Robin and Neumann problems for the nonlinear Schr\"odinger equation on the half-plane}
{A. Himonas \& D. Mantzavinos}
%

%
%
%
%
%%%%%%%%%%%
%
%	Introduction
%
%%%%%%%%%%%
%
%
%
\section{Introduction}
\label{2d-rnls-intro}

We study the initial-boundary value problem (ibvp) of the nonlinear Schr\"odinger (NLS) equation on the half-plane with a Robin boundary condition, that is
\begin{equation}
\label{2d-rnls-ibvp}
\begin{aligned}
&i u_t + u_{x_1x_1}+u_{x_2x_2} = \pm |u|^{\alpha-1} u,
\quad (x_1, x_2)\in \mathbb R \times \mathbb R^+, \ t \in (0, T),  
\\
&u(x_1, x_2, 0) = u_0(x_1, x_2),
\\
&(u_{x_2} + \gamma u) (x_1, 0, t)  = g(x_1, t),
\end{aligned}
\end{equation}
where $(\alpha -1)/2 \in \mathbb{N}$ and $\gamma \in \mathbb R$.
 When $\gamma=0$, this is the Neumann problem, which we  examine at the end of this work.
Here, we establish the local well-posedness of ibvp \eqref{2d-rnls-ibvp} for initial data $u_0$ in the 
Sobolev space of the half-plane
$H^s(\mathbb R_{x_1}\times \mathbb R_{x_2}^+)$
and boundary data in the Bourgain-type space $B_T^s$
suggested by the solution estimate of the reduced pure linear ibvp (see  \eqref{2d-rnls-pure-se}). 
We recall that
 $H^s(\mathbb R_{x_1}\times \mathbb R_{x_2}^+)$ is defined as the restriction of the Sobolev space $H^s(\mathbb R^2)$ 
 to $\mathbb R_{x_1}\times \mathbb R_{x_2}^+$  with~norm
\begin{equation}
\left\| f \right\|_{H^s(\mathbb R_{x_1}\times \mathbb R_{x_2}^+)}
=
\inf \left\{ \left\| F \right\|_{H^s(\mathbb R^2)}: F \in H^s(\mathbb R^2) \text{ and } F|_{\mathbb R_{x_1}\times \mathbb R_{x_2}^+} = f \right\}.
\end{equation}
The boundary data space $B^s_T$, which can be thought as expressing the time regularity of the linear homogeneous problem in  two dimensions,
is defined by 
\begin{equation}\label{bst-def}
B_T^s 
= 
X_T^{0, \frac{2s-1}{4}} \cap  X_T^{s, -\frac 14},
\end{equation}
where  $X_T^{0, \frac{2s-1}{4}}$ and $X_T^{s, -\frac 14}$ are 
the Bourgain-type spaces defined via the norm
\begin{equation}
\label{xsbt-def}
\left\|g\right\|_{X_T^{\sigma, b}}^2
=
\int_{k_1\in\mathbb R} 
\big(1+k_1^2\big)^\sigma 
\big\|e^{ik_1^2t} \, \widehat g^{x_1}(k_1, t)\big\|_{H^b(0, T)}^2 dk_1
\end{equation}
with $\widehat g^{x_1}$ denoting the Fourier transform of $g$ with respect to $x_1$, i.e.
\begin{equation}
\widehat g^{x_1}(k_1, t)
=
\int_{x_1\in\mathbb R} e^{-ik_1x_1} g(x_1, t) dx_1, \quad k_1 \in \mathbb R, \ t \in (0, T).
\end{equation}
Furthermore, we note that the  spaces $X_T^{\sigma, b}$ can be regarded as restrictions on $\mathbb R \times (0, T)$ of the celebrated Bourgain spaces   $X^{\sigma, b}(\mathbb R_x\times \mathbb R_t)$, which are defined via the norm  \cite{b1993}
\begin{equation}\label{xsb-def}
\begin{aligned}
\left\|g\right\|_{X^{\sigma, b}}^2
&=
\int_{k_1\in\mathbb R} 
\big(1+k_1^2\big)^\sigma 
\left(1+\big|\tau+k_1^2\big|^2\right)^b
\left|\widehat g(k_1, \tau)\right|^2 dk_1
\\
&=
\int_{k_1\in\mathbb R} 
\big(1+k_1^2\big)^\sigma 
\left\|e^{ik_1^2t} \, \widehat g^{x_1}(k_1, t)\right\|_{H^b(\mathbb R_t)}^2 dk_1.
\end{aligned}
\end{equation}

Now, we are able to state the main result of this work 
more precisely as follows.
\begin{theorem}[\textcolor{blue}{Local well-posedness}]
\label{2d-rnls-t}
Suppose $1<s<\frac 32$. Then, for initial data  $u_0\in H^s(\mathbb R_{x_1}\times \mathbb R_{x_2}^+)$ and Robin ($\gamma\neq 0$) or Neumann ($\gamma=0$) boundary data $g\in B_T^s$, the NLS  ibvp~\eqref{2d-rnls-ibvp}   is locally well-posed in the sense of Hadamard. More precisely, for  
\begin{equation}\label{lifespan}
T^* = \min\big\{
T, \, c_{s, \gamma, \alpha}  \big( \left\|u_0\right\|_{H^s(\mathbb R_{x_1}\times \mathbb R_{x_2}^+)} + \left\|g\right\|_{B_T^s} \big)^{-2(\alpha-1)}
\big\}, \quad c_{s, \gamma, \alpha} >0, 
\end{equation}
there exists a unique solution $u\in C([0, T^*]; H^s(\mathbb R_{x_1}\times \mathbb R_{x_2}^+))$ which satisfies the estimate
\begin{equation}
\sup_{t\in [0, T^*]} \left\|u(t)\right\|_{H^s(\mathbb R_{x_1}\times \mathbb R_{x_2}^+)} 
\leqslant
2c_{s, \gamma} \, \big(\left\|u_0\right\|_{H^s(\mathbb R_{x_1}\times \mathbb R_{x_2}^+)} + \left\|g\right\|_{B_T^s}\big), \quad c_{s, \gamma} >0.
\end{equation}
In addition, the data-to-solution map $\left\{u_0, g\right\}\mapsto u$ is locally Lipschitz continuous.
\end{theorem}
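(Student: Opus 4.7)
My plan is to recast ibvp \eqref{2d-rnls-ibvp} as the fixed-point equation $u = \Phi[u]$, where
\[
\Phi[u] \; := \; S\bigl[u_0,\,g;\ \mp\,|u|^{\alpha-1}u\bigr]
\]
and $S[u_0,g;f]$ denotes the explicit solution operator, obtained via Fokas's unified transform, for the \emph{forced} linear ibvp associated with \eqref{2d-rnls-ibvp} with initial datum $u_0$, Robin/Neumann datum $g$, and forcing $f$. The task is to show that $\Phi$ is a contraction on a suitable closed ball of $C\bigl([0,T^*];H^s(\mathbb R_{x_1}\times\mathbb R_{x_2}^+)\bigr)$ with $T^*$ as in \eqref{lifespan}.

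The core of the argument rests on two linear estimates. The first is the \emph{pure linear} estimate
\[
\|S[u_0,g;0]\|_{C([0,T];\,H^s(\mathbb R_{x_1}\times\mathbb R_{x_2}^+))} \;\leqslant\; c_{s,\gamma}\bigl(\|u_0\|_{H^s(\mathbb R_{x_1}\times\mathbb R_{x_2}^+)}+\|g\|_{B^s_T}\bigr),
\]
obtained by splitting the Fokas representation into an $x_1$-Fourier inversion and an $x_2$-direction integral deformed into a complex contour. The definition \eqref{bst-def}--\eqref{xsbt-def} of $B^s_T$ is precisely tuned so that every boundary contribution of this representation admits an $H^s$ bound, in agreement with the estimate \eqref{2d-rnls-pure-se} for the reduced pure linear ibvp. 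The second is the \emph{forced linear} estimate
\[
\|S[0,0;f]\|_{C([0,T^*];\,H^s(\mathbb R_{x_1}\times\mathbb R_{x_2}^+))} \;\leqslant\; c_{s,\gamma}\,T^* \sup_{t\in[0,T^*]}\|f(t)\|_{H^s(\mathbb R_{x_1}\times\mathbb R_{x_2}^+)},
\]
which follows from Duhamel's principle by applying the pure linear estimate at each time slice.

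Combining these two bounds with the Banach algebra property of $H^s(\mathbb R_{x_1}\times\mathbb R_{x_2}^+)$, available in the range $1<s<\tfrac32$ (so that $s>d/2$ with $d=2$ while corner trace compatibility at $(x_1,0,0)$ is avoided), one arrives at
\[
\|\Phi[u]\|_{C([0,T^*];\,H^s)} \;\leqslant\; c_{s,\gamma}\bigl(\|u_0\|_{H^s}+\|g\|_{B^s_T}\bigr) + c_{s,\gamma,\alpha}\,T^*\sup_{t\in[0,T^*]}\|u(t)\|_{H^s}^{\alpha},
\]
together with an analogous Lipschitz bound for $\Phi[u]-\Phi[v]$ of the form $c_{s,\gamma,\alpha}T^*(\|u\|+\|v\|)^{\alpha-1}\|u-v\|$. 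Choosing $T^*$ as in \eqref{lifespan} forces $\Phi$ to be a strict contraction on the closed ball of radius $R=2c_{s,\gamma}(\|u_0\|_{H^s}+\|g\|_{B^s_T})$ in $C([0,T^*];H^s)$, yielding the unique fixed point $u$ and the claimed a priori bound. Local Lipschitz continuity of the data-to-solution map $\{u_0,g\}\mapsto u$ then follows by re-running the contraction argument on differences of data.

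The main obstacle is the pure linear estimate. The Fokas formula for the Robin/Neumann ibvp involves $k_2$-integrals along complex contours, along which the various terms built from $u_0$ and $g$ must be bounded using only the relatively weak Bourgain-type norm $\|g\|_{B^s_T}$. Extracting a time-uniform $H^s$ bound therefore requires a careful contour deformation and a high/low frequency splitting in $k_1$ that balances the spatial $H^s$ weight $(1+k_1^2)^{s/2}$ against the fractional time-regularity weights $H^{(2s-1)/4}$ and $H^{-1/4}$ appearing in \eqref{bst-def}. Once this estimate is in place, the forced linear bound, the algebra estimate, and the contraction closure all proceed via now-standard arguments.
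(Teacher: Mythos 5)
Your proposal follows essentially the same route as the paper: Theorem \ref{2d-rnls-t} is proved by combining the forced linear ibvp estimate \eqref{2d-rfls-se-r} of Theorem \ref{2d-rfls-t} with the algebra property of $H^s$ for $s>1$ to show that $\Phi: u\mapsto S\big[u_0,g;\pm|u|^{\alpha-1}u\big]$ is a contraction on a ball of radius $2c_{s,\gamma}\big(\|u_0\|_{H^s(\mathbb R_{x_1}\times\mathbb R_{x_2}^+)}+\|g\|_{B_T^s}\big)$ in $C([0,T^*];H^s)$. The only inaccuracies are cosmetic: in the paper the forcing contribution carries a factor $\sqrt{T}$ rather than $T$ (which is what makes the exponent $-2(\alpha-1)$ in \eqref{lifespan} the natural one), and the forced linear ibvp is handled not by time-slicing the pure linear ibvp but by extending $f$ to the whole plane and correcting the Robin trace of the resulting Duhamel term through the pure linear ibvp with datum $\psi_2\in B_T^s$.
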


Theorem \ref{2d-rnls-t} completes the picture of the NLS ibvp
well-posedness on the half-plane for smooth data initiated 
in our earlier work \cite{hm2020} for the case of Dirichlet data. 
In one spatial dimension and on the half-line, the well-posedness of the NLS ibvp by utilizing the Fokas unified transform method was established in \cite{fhm2017}  for Dirichlet data and in \cite{hmy2019nls} and \cite{hm2021} for Neumann and Robin data respectively.
Furthermore, concerning the ibvp for the Korteweg-de Vries (KdV) equation on the half-line, this approach for proving 
well-posedness has been implemented   in \cite{fhm2016} for Dirichlet data and in \cite{hmy2021} for Neumann and Robin data.
We also mention that there are two other approaches  in the literature for studying the well-posedness of the ibvp for KdV and NLS on the half-line with Dirichlet data. 
The first approach is due to Bona, Sun and Zhang \cite{bsz2002,bsz2006,bsz2018} and uses the Laplace transform in the temporal variable for analyzing the linear problem (see also \cite{et2016}).
The second approach was developed by Colliander and Kenig for the generalized KdV on the half-line \cite{ck2002}, and later on by Holmer for the NLS and KdV on the half-line \cite{h2005,h2006}, and is based on expressing the linear ibvp as a superposition of initial value problems.
Moreover, besides~\cite{hm2020}, two other works in the literature on the well-posedness of the two-dimensional NLS equation on the half-plane are those by  Ran, Sun and Zhang \cite{rsz2018} and Audiard \cite{a2018}.

The well-posedness of the  nonlinear ibvp \eqref{2d-rnls-ibvp} will be established via a contraction mapping argument utilizing the unified transform solution formula and the estimates obtained for the forced linear version of that ibvp.
Therefore, the first step of our approach is to derive the 
Fokas unified transform solution  for the  forced linear ibvp 
\begin{equation}\label{2d-rfls-ibvp}
\begin{aligned}
&i u_t + u_{x_1x_1}+u_{x_2x_2} = 
f(x_1, x_2, t) \in C([0, T]; H^s(\mathbb R_{x_1}\times \mathbb R_{x_2}^+)),
\\
&u(x_1, x_2, 0) = u_0(x_1, x_2)\in H^s(\mathbb R_{x_1}\times \mathbb R_{x_2}^+),  
\\
&(u_{x_2} + \gamma u)(x_1, 0, t) = g(x_1, t)\in B_T^s.
\end{aligned}
\end{equation}
This formula is given by (see Section \ref{2d-rnls-utm-s} for an outline of its derivation)
\begin{align}\label{2d-rfls-sol-T}
&\quad
u(x_1, x_2, t)
=
S\big[u_0, g; f\big](x_1, x_2, t)
\\
&=
\frac{1}{(2\pi)^2} \int_{k_1\in\mathbb R} \int_{k_2\in\mathbb R} e^{ik_1x_1+ik_2x_2-i(k_1^2+k_2^2)t} \, \widehat u_0(k_1, k_2) dk_2 dk_1
\nn\\
&
\quad
+\frac{1}{(2\pi)^2} \int_{k_1\in\mathbb R} \int_{k_2\in\mathcal C} e^{ik_1x_1+ik_2x_2-i(k_1^2+k_2^2)t} \, \frac{k_2+i\gamma}{k_2-i\gamma} \, \widehat u_0(k_1,-k_2) dk_2 dk_1
\nn\\
&\quad
-\frac{i}{(2\pi)^2} \int_{k_1\in\mathbb R} \int_{k_2\in\mathbb R} e^{ik_1x_1+ik_2x_2-i(k_1^2+k_2^2)t} \int_{t'=0}^t  e^{i(k_1^2+k_2^2)t'}\widehat f(k_1, k_2, t')dt' dk_2 dk_1
\nn\\
&
\quad
-\frac{i}{(2\pi)^2}
\int_{k_1\in\mathbb R} \int_{k_2\in\mathcal C}
e^{ik_1x_1+ik_2x_2-i(k_1^2+k_2^2)t}  \, \frac{k_2+i\gamma}{k_2-i\gamma} \,
\int_{t'=0}^t 
e^{i(k_1^2+k_2^2)t'}\widehat f(k_1, -k_2, t')dt'
dk_2 dk_1
\nn\\
&\quad
-\frac{i}{(2\pi)^2} \int_{k_1\in\mathbb R}\int_{k_2\in\mathcal C} e^{ik_1x_1+ik_2x_2-i(k_1^2+k_2^2)t}
 \, \frac{2k_2}{k_2-i\gamma} \, \widetilde g (k_1, k_1^2+k_2^2, T)
dk_2dk_1,
\nn
\end{align}
where the complex contour  $\mathcal C$ is either $\p D$ (for $\gamma\leqslant 0$) or $\p \widetilde D$ (for $\gamma>0$), as shown in Figure~\ref{2d-rnls-dplus-f},

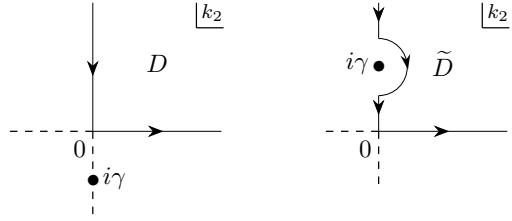
\begin{figure}[ht!]
\centering
\vspace{2cm}
\hspace{-5cm}
\begin{tikzpicture}[scale=0.95]
\pgflowlevelsynccm
\draw[line width=.5pt, black, dashed](0,0)--(-1.2,0);
\draw[line width=.5pt, black, dashed](0,0)--(0,-1.2);
\draw[line width=.5pt, black](1.45,1.8)--(1.45,1.45);
\draw[line width=.5pt, black](1.45,1.45)--(1.83,1.45);
\node[] at (1.605, 1.66) {\fontsize{8}{8} $k_2$};
\node[] at (-0.25, -0.25) {\fontsize{10}{10} $0$};
\draw[middlearrow={Stealth[scale=1.3, reversed]}] (0,0) -- (90:1.8);
\draw[middlearrow={Stealth[scale=1.3]}] (0,0) -- (0:1.8);
\node[] at (0.9, 0.95) {\fontsize{10}{10}\it $D$};
\node[] at (0.3, -0.65) {\fontsize{10}{10}\it $i\gamma$};
\node[dotted] at (0.007, -0.7) {\textbullet};
\draw[xshift=4cm, line width=.5pt, black, dashed](0,0)--(-0.8,0);
\draw[xshift=4cm, line width=.5pt, black, dashed](0,0)--(0,-0.8);
\draw[xshift=4cm, line width=.5pt, black](1.45,1.8)--(1.45,1.45);
\draw[xshift=4cm, line width=.5pt, black](1.45,1.45)--(1.83,1.45);
\node[xshift=4cm] at (1.605, 1.66) {\fontsize{8}{8} $k_2$};
\node[xshift=4cm] at (-0.25, -0.25) {\fontsize{10}{10} $0$};
\draw[xshift=4cm, middlearrow={Stealth[scale=1.3]}] (0,1.8) -- (90:1.3);
\draw[xshift=4cm, middlearrow={Stealth[scale=1.3]}] (0,0.5) -- (90:0);
\draw[xshift=4cm, middlearrow={Stealth[scale=1.3]}] (0,0) -- (0:1.8);
\node[xshift=4cm] at (0.9, 0.95) {\fontsize{10}{10}\it $\widetilde D$};
\node[xshift=4cm] at (-0.3, 0.95) {\fontsize{10}{10}\it $i\gamma$};
\node[xshift=4cm, dotted] at (0.007, 0.9) {\textbullet};
\draw[xshift=4cm] (0, 0.5) arc (-90:90:0.4);
\appendarrow{6pt}{black}{(4.38, 1) to [out=90, in=90] (4.4, 0.9);}
\end{tikzpicture}
\vspace{12mm}
\caption{The regions $D$  and $\widetilde D$  with their positively oriented boundaries $\p D$ and $\p \widetilde D$, which assume the role of the contour $\mathcal C$ in  formula \eqref{2d-rfls-sol-T} for $\gamma\leqslant 0$ and $\gamma>0$ respectively.}
\label{2d-rnls-dplus-f}
\end{figure}

\noindent
the terms $\widehat u_0$ and $\widehat f$  denote the \textbf{half-plane} Fourier transforms of $u_0$ and $f$ defined according to
\begin{equation}\label{ft-hp-def}
\widehat \varphi(k_1, k_2) = \int_{x_1\in \mathbb R} \int_{x_2=0}^\infty e^{-ik_1x_1-ik_2x_2} \, \varphi(x_1, x_2) dx_2 dx_1,
\end{equation}
the transform $\widetilde g$ is defined in terms of the boundary data $g$ by
\begin{equation}\label{gtil-def}
\widetilde g (k_1, k_1^2+k_2^2, T)
= \int_{t=0}^T e^{i(k_1^2+k_2^2)t} \int_{x_1\in \mathbb R} e^{-ik_1x_1} g(x_1, t) dx_1 dt.
\end{equation}

The second step of our approach consists in estimating 
the solution \eqref{2d-rfls-sol-T} of the forced linear ibvp~\eqref{2d-rfls-ibvp}. The estimate derived  is described in the following result.
\begin{theorem}[\textcolor{blue}{Linear ibvp}]
\label{2d-rfls-t}
Suppose $1\leqslant s  <  \frac 32$. Then, Fokas's unified transform formula~\eqref{2d-rfls-sol-T} defines a solution $u = S\big[u_0, g; f\big]$ to the forced linear Schr\"odinger ibvp \eqref{2d-rfls-ibvp} with initial data  $u_0\in H^s(\mathbb R_{x_1}\times \mathbb R_{x_2}^+)$, Robin ($\gamma\neq 0$)  or  Neumann ($\gamma=0$) boundary data $g \in B_T^s$ and forcing  $f\in C([0, T]; H^s(\mathbb R_{x_1}\times \mathbb R_{x_2}^+))$, which satisfies the estimate
\begin{align}\label{2d-rfls-se-r}
\sup_{t\in [0, T]} \left\|S\big[u_0, g; f\big](t)\right\|_{H^s(\mathbb R_{x_1}\times \mathbb R_{x_2}^+)}
&\leqslant
c_{s, \gamma}
\Big(
\left\|u_0\right\|_{H^s(\mathbb R_{x_1}\times \mathbb R_{x_2}^+)}
+
\left\|g\right\|_{B_T^s}
\nn\\
&\hspace*{1.2cm}
+
\sqrt T
\sup_{t\in [0, T]}
\left\|f(t)\right\|_{H^s(\mathbb R_{x_1}\times \mathbb R_{x_2}^+)}
\Big).
\end{align}
\end{theorem}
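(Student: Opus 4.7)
The plan is to exploit the linearity of the unified-transform representation \eqref{2d-rfls-sol-T} and write $S[u_0, g; f] = S_{\mathrm{iv}}[u_0] + S_{\mathrm{bv}}[g] + S_{\mathrm{f}}[f]$, where $S_{\mathrm{iv}}$ collects the first two integrals in \eqref{2d-rfls-sol-T}, $S_{\mathrm{f}}$ the third and fourth, and $S_{\mathrm{bv}}$ the last one, then bound each piece separately in $C([0, T]; H^s(\mathbb R_{x_1}\times \mathbb R_{x_2}^+))$. Throughout, the half-plane norm is replaced by the $H^s(\mathbb R^2)$ norm of a bounded extension (permissible since $s<3/2$), so that the Fourier-side estimates can be carried out via Plancherel on $\mathbb R^2$.

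For $S_{\mathrm{iv}}[u_0]$, the first integral is the free Schr\"odinger evolution on $\mathbb R^2$ and is bounded by $\|u_0\|_{H^s(\mathbb R^2)}$ directly, since $|e^{-i(k_1^2+k_2^2)t}|=1$. The second (reflection) integral is handled by deforming the $k_2$-contour $\mathcal C$ back to $\mathbb R$: for $\gamma\leqslant 0$ the pole $k_2 = i\gamma$ of the Robin multiplier $(k_2+i\gamma)/(k_2-i\gamma)$ lies in the lower half-plane and no residue arises, while for $\gamma>0$ the contour $\p \widetilde D$ loops around the pole by design and the associated residue contribution is an explicit, $x_2$-exponentially decaying term that is estimated in closed form. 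Once on $\mathbb R$ the multiplier has modulus one and the substitution $k_2 \mapsto -k_2$ turns the integrand into a free evolution of $u_0$. The forcing piece $S_{\mathrm{f}}[f]$ is then controlled by Duhamel: its structure mirrors $S_{\mathrm{iv}}[u_0]$ with $\widehat u_0(k_1,\pm k_2)$ replaced by $\int_0^t e^{i(k_1^2+k_2^2)t'}\widehat f(k_1,\pm k_2, t')\,dt'$, so applying the $S_{\mathrm{iv}}$-estimate pointwise in $t'$ and invoking Minkowski and Cauchy--Schwarz on $t'\in[0,T]$ yields the $\sqrt T\,\sup_{t\in[0,T]}\|f(t)\|_{H^s}$ contribution.

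The boundary-data piece $S_{\mathrm{bv}}[g]$ is where the Bourgain-type space $B_T^s$ is forced. The strategy is to parameterize $\mathcal C$ piecewise --- along the positive real axis $k_2\in(0,\infty)$ and along the positive imaginary axis $k_2 = i\eta$ with $\eta>0$ --- and on each branch introduce the variable $\tau = k_1^2+k_2^2$. Under this substitution, $\widetilde g(k_1, k_1^2 + k_2^2, T)$ becomes (up to the $e^{ik_1^2 t}$ twist already built into \eqref{xsbt-def}) the time Fourier transform of the cutoff $\mathbf 1_{[0,T]}(t)\,\widehat g^{x_1}(k_1, t)$. On the real-$k_2$ branch, Plancherel in $(k_1,k_2)$ reduces the $H^s$-estimate of $S_{\mathrm{bv}}[g]$ to a weighted $L^2$-inequality in which the Sobolev weight $(1+k_1^2+k_2^2)^s$, the Jacobian $|d\tau/dk_2|\sim k_2$, and the multiplier $2k_2/(k_2-i\gamma)$ must be matched against the Bourgain weights $(1+k_1^2)^\sigma(1+|\tau - k_1^2|^2)^b$; the split according to the two regimes $|k_2|\gtrsim \langle k_1\rangle$ and $|k_2|\ll \langle k_1\rangle$ produces the exponent pairs $(\sigma,b) = (0,(2s-1)/4)$ and $(s,-1/4)$ respectively --- precisely those defining $B_T^s$ in \eqref{bst-def}. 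On the imaginary-$k_2$ branch, Plancherel in $k_2$ is unavailable and one integrates $x_2\in(0,\infty)$ directly, so that the product $e^{-(\eta_1+\eta_2)x_2}$ produces a Hilbert-type weight $1/(\eta_1+\eta_2)$ that is handled by a Schur/Hardy argument and matched again to the same Bourgain weights.

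The primary obstacle is this frequency-matching on the boundary piece. In the low-$k_2$ regime the pointwise weight comparison fails by a local $(\tau - k_1^2)^{-1/2}$ singularity, and the bound can only be recovered by exploiting the $b = -1/4$ weight of $X_T^{s,-1/4}$ in an $L^2$-averaged (rather than pointwise) sense, together with a careful passage between the $H^b(0,T)$ norm in \eqref{xsbt-def} and the Fourier transform of the $[0,T]$-cutoff that is valid for $b$ in the borderline Bourgain range $|b|<1/2$. The evanescent branch with $k_2 = i\eta$, where Plancherel in the dual variable is replaced by a direct $x_2$-integration and Schur-type estimates, adds a further layer of technical bookkeeping and is where the bulk of the analytic effort of the proof lies.
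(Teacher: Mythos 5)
Your overall architecture --- splitting the Fokas representation \eqref{2d-rfls-sol-T} term by term into $S_{\mathrm{iv}}[u_0]+S_{\mathrm{bv}}[g]+S_{\mathrm{f}}[f]$ and estimating each integral separately --- is not the paper's route, and it breaks down at the very first step in the range $1\leqslant s<\tfrac32$. The transforms $\widehat u_0$ and $\widehat f$ appearing in \eqref{2d-rfls-sol-T} are the \emph{half-plane} transforms \eqref{ft-hp-def}, i.e.\ the Fourier transforms of the extensions of $u_0$ and $f$ by \emph{zero} to $x_2<0$. Your bound for the first integral ("the free evolution, bounded by $\|u_0\|_{H^s(\mathbb R^2)}$ via Plancherel") therefore produces $\bigl(\int(1+k_1^2+k_2^2)^s|\widehat u_0(k_1,k_2)|^2\,dk\bigr)^{1/2}=\|E_0u_0\|_{H^s(\mathbb R^2)}$, the Sobolev norm of the zero-extension. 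This is not controlled by $\|u_0\|_{H^s(\mathbb R_{x_1}\times\mathbb R_{x_2}^+)}$ once $s\geqslant\tfrac12$ (it is generically infinite, since $u_0$ need not vanish on $x_2=0$), and the same objection applies to the reflected term after you deform $\mathcal C$ back to $\mathbb R$, and to both forcing integrals. Your parenthetical "permissible since $s<3/2$" does not repair this: you cannot substitute a bounded extension $U_0$ into \eqref{2d-rfls-sol-T}, because the reflection term $\widehat u_0(k_1,-k_2)$ on the complex contour relies on the analyticity of the half-plane transform in $\operatorname{Im}k_2<0$, which the whole-plane transform of $U_0$ does not possess. The individual Fokas integrals simply do not admit the claimed bounds; only suitable combinations do.

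The paper circumvents this with a genuinely different decomposition \eqref{2d-rnls-decomp}: it first chooses extensions $U_0$, $F$ to $\mathbb R^2$ with comparable norms \eqref{ic-f-ext}, writes the solution as the whole-plane free evolution $S[U_0;0]$ plus the whole-plane Duhamel term $S[0;F]$ (restricted to $x_2>0$), and then adds two \emph{pure} ibvps $S[0,\psi_j;0]$ whose boundary data $\psi_1,\psi_2$ in \eqref{psi-chi-def} are exactly the Robin traces needed to restore the boundary condition. The new analytic input is that $\psi_1,\psi_2\in B_T^s$, which is the content of the trace estimates \eqref{2d-rls-te}, \eqref{2d-rls-te-2}, \eqref{2d-rfls-ivp-te}, \eqref{2d-rfls-ivp-te-2}; the Fokas formula is then only ever estimated for the reduced pure problem (zero data, compactly supported boundary datum), where the zero-extension issue does not arise. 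Your analysis of the boundary piece $S_{\mathrm{bv}}[g]$ --- parametrizing $\mathcal C$, substituting $\tau=k_1^2+k_2^2$, matching the weights to the two exponent pairs $(0,\tfrac{2s-1}{4})$ and $(s,-\tfrac14)$, and using a Hardy/Schur (i.e.\ Laplace-transform $L^2$-boundedness) argument on the evanescent branch --- does track the paper's proof of Theorem \ref{2d-rnls-pure-t}, and your remark about passing from $H^b(0,T)$ to the Fourier transform of the cutoff for $|b|<\tfrac12$ corresponds to the Lions--Magenes and Jerison--Kenig extension lemmas used in \eqref{ext-ineq-low}--\eqref{m-chi-ineq-3}. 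But without replacing the initial-data and forcing pieces by the extension-plus-boundary-correction scheme, the proof as proposed does not close.
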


The unified transform providing the solution formula  \eqref{2d-rfls-sol-T} for the forced linear ibvp studied in this work was introduced in 1997  by Fokas \cite{f1997} (see also the monograph \cite{f2008}).
The method was originally motivated through an effort to develop an ibvp counterpart for the inverse scattering transform used for studying completely integrable \textit{nonlinear} equations in the initial value problem setting.
However, it was immediately realized that Fokas's transform had significant implications also at the level of \textit{linear} ibvps, in particular, taking into account its applicability to linear evolution equations of arbitrary spatial order and dimension,  formulated with any kind of admissible boundary conditions. In this regard, the unified transform provides the direct, \textit{natural} analogue in the linear ibvp setting of the classical Fourier transform used for solving linear initial value problems.
For additional results on the ibvp of NLS,
KdV and related equations via the Fokas method see, for example, \cite{f2002,f2009,fi2004,fp2005,lf2012,lf2015,fl2012,fl2014,c2018,oy2019,bfo2020,dfl2021} 
as well as the review articles \cite{fs2012,dtv2014}.

The NLS equation has an extensive literature. Concerning its physical significance, it arises as a universal model in mathematical physics, e.g. in nonlinear optics \cite{t1964}, water waves \cite{p1983,css1992},  plasmas \cite{ww1977} and Bose-Einstein condensates \cite{ps2003}. 
Moreover, the cubic NLS in one spatial dimension  is a prime example of a completely integrable system and can be studied via the inverse scattering transform  \cite{zs1972}.
Finally, concerning the well-posedness of the initial value problem for NLS in Sobolev spaces, we refer the reader to \cite{gv1979,t1987,cs1989,cw1990,kpv1991,kpv1993,b1993,b1999,t2005,lp2009,cms2016}
and the references therein.

\vskip 2mm
\noindent
\textit{Organization.}
In Section \ref{2d-rnls-pure-s}, we estimate the solution to the 
reduced pure linear Robin problem, which has 
zero forcing, zero initial data, and boundary data
compactly supported in time. Section~\ref{ivp-s} is devoted to the estimation of the linear Schr\"odinger initial value problem. In Section~\ref{fls-ibvp-s}, we combine the results of the previous two sections to prove Theorem \ref{2d-rfls-t} for the forced linear ibvp \eqref{2d-rfls-ibvp} and, in turn, Theorem~\ref{2d-rnls-t} for the well-posedness of the nonlinear ibvp \eqref{2d-rnls-ibvp}. Section~\ref{neumann-s} provides the modifications required in the proofs in the case of the Neumann problem. Finally, in Section \ref{2d-rnls-utm-s} we give a brief derivation of the Fokas unified transform solution formula \eqref{2d-rfls-sol-T}.

%
%
%
%%%%%%%%%%%%%%%%%
%
%	The Pure Linear IBVP
%
%%%%%%%%%%%%%%%%%
%
%
%
\section{The reduced pure linear ibvp}
\label{2d-rnls-pure-s}

The basis for proving the nonlinear well-posedness Theorem \ref{2d-rnls-t} is provided by the linear estimate of Theorem \ref{2d-rfls-t}  for the forced linear ibvp \eqref{2d-rfls-ibvp}. 
In order to establish this crucial estimate, we begin our analysis from a simplified version of problem \eqref{2d-rfls-ibvp} which involves zero forcing, zero initial data, and compactly supported in time boundary data. 
We call this problem the \textit{reduced pure linear ibvp}, as its non-boundary components are both zero and, furthermore, its boundary datum is reduced to the class of  functions with compact support in $t$. 

More precisely, for the Robin problem ($\gamma \neq 0$), the reduced pure linear ibvp is given by
\begin{equation}\label{2d-rnls-pure}
\begin{aligned}
&iv_t+v_{x_1x_1}+v_{x_2x_2} = 0, \quad (x_1, x_2)\in \mathbb R \times \mathbb R^+, \ t\in (0, 2),
\\
&v(x_1, x_2, 0)= 0, 
\\
&(v_{x_2} + \gamma v)(x_1, 0, t) = g(x_1, t),
\quad
\text{supp}(g)\subset \mathbb R_{x_1}\times (0, 2),
\end{aligned}
\end{equation}
where $g(x_1, t)$ is a globally defined function with  compact support in $t$.
For the Neumann problem ($\gamma\neq 0$), the analysis of the reduced pure linear ibvp  is provided in Section~\ref{neumann-s}.
In the case of problem~\eqref{2d-rnls-pure}, the Fokas unified transform formula \eqref{2d-rfls-sol-T} simplifies to
\begin{equation}\label{2d-nls-pure-sol}
v(x_1, x_2, t)
=
-\frac{i}{(2\pi)^2}
\int_{k_1\in\mathbb R}\int_{k_2\in\mathcal C}
e^{ik_1x_1+ik_2x_2-i(k_1^2+k_2^2)t}
\, \frac{2k_2}{k_2-i\gamma} \, \widehat g(k_1, -k_1^2-k_2^2)\, dk_2dk_1
\end{equation}
where the transform $\widetilde g$ defined by \eqref{gtil-def} has now been replaced by the  Fourier transform $\widehat g$ of $g$ in $x_1$ and $t$ since,  thanks to the compact support of $g$ in $t$,  
\begin{equation}\label{ghat-def}
\widetilde g(k_1, k_1^2+k_2^2, 2)
=
\int_{x_1\in\mathbb R} \int_{t\in\mathbb R} e^{-ik_1x_1+i(k_1^2+k_2^2) t} g(x_1, t) dt dx_1
=
\widehat g(k_1, -k_1^2-k_2^2).
\end{equation}

Next, we will use the Fokas formula \eqref{2d-nls-pure-sol} in order to estimate the solution of the reduced pure linear ibvp \eqref{2d-rnls-pure} in the Hadamard space $C([0, 2]; H^s(\mathbb R_{x_1}\times \mathbb R_{x_2}^+))$. Through this process, we will discover the correct function space for the boundary datum $g(x_1, t)$. In particular, our analysis will reveal the Bourgain spaces $X^{0, \frac{2s-1}{4}}$ and $X^{s, -\frac 14}$ as the global analogues of the spaces $X_T^{0, \frac{2s-1}{4}}$ and $X_T^{s, -\frac 14}$ given in the introduction for the boundary data of the non-reduced problem \eqref{2d-rfls-ibvp}. The precise statement of this result is as follows.
\begin{theorem}[\textcolor{blue}{Basic linear estimate for the Robin problem}]
\label{2d-rnls-pure-t}
Let $s\geqslant 0$ and $\gamma \neq 0$. Then, the solution $v(x_1, x_2, t)$ of the reduced pure linear ibvp \eqref{2d-rnls-pure}, as given by the Fokas  formula \eqref{2d-nls-pure-sol}, satisfies the Hadamard space estimate
\begin{equation}\label{2d-rnls-pure-se}
\sup_{t\in [0, 2]} \left\| v(t) \right\|_{H^s(\mathbb R_{x_1}\times \mathbb R_{x_2}^+)}
\leqslant
c_{s, \gamma}
\big(
\left\|g\right\|_{X^{0, \frac{2s-1}{4}}}
+
\left\|g\right\|_{X^{s, -\frac 14}}
\big),
\end{equation}
where the Bourgain spaces $X^{\sigma, b}$ are defined by \eqref{xsb-def}.
\end{theorem}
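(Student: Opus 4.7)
The plan is to parametrize the contour $\mathcal{C}$ in formula \eqref{2d-nls-pure-sol} into its real piece $k_2 = \ell \in (0, \infty)$ and its imaginary piece $k_2 = i\ell$, $\ell \in (0, \infty)$, thereby splitting the solution as $v = v^{\mathrm{real}} + v^{\mathrm{imag}}$. For $\gamma > 0$, the imaginary piece is deformed around the pole at $k_2 = i\gamma$ and the resulting small-arc contribution is handled as a separate, lower-order term by means of the continuity of $\widehat g$. For each of $v^{\mathrm{real}}$ and $v^{\mathrm{imag}}$ I would construct an explicit extension to $x_2 \in \mathbb{R}$ and bound its $H^s(\mathbb{R}^2)$ norm; by the definition of the half-plane Sobolev norm as an infimum over extensions, this yields the claimed estimate \eqref{2d-rnls-pure-se}.

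For $v^{\mathrm{real}}$, the phase $e^{ik_1 x_1 + i\ell x_2 - i(k_1^2+\ell^2)t}$ is oscillatory in both spatial variables, so the same formula provides a natural extension to $\mathbb{R}^2$ and Plancherel applies directly. After the substitution $\tau = -k_1^2 - \ell^2$, whose Jacobian $1/(2\ell)$ combines with the squared modulus of the symbol $2\ell/(\ell - i\gamma)$ to give a factor of order $|\tau+k_1^2|^{1/2}/(|\tau+k_1^2| + \gamma^2)$ on $|\widehat g(k_1, \tau)|^2$, the bound reduces to a weighted $L^2$-integral in $(k_1, \tau)$. Splitting into the regimes $|\tau+k_1^2| \lessgtr 1$ and invoking the elementary inequality $(1+|\tau|)^s \lesssim (1+|\tau+k_1^2|)^s + (1+k_1^2)^s$ then recovers precisely the Bourgain weights $(1+|\tau+k_1^2|^2)^{(2s-1)/4}$ and $(1+k_1^2)^s (1+|\tau+k_1^2|^2)^{-1/4}$ as upper bounds.

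For $v^{\mathrm{imag}}$, the substitution $k_2 = i\ell$ produces the exponentially decaying kernel $e^{-\ell x_2}$, and after $\tau = \ell^2 - k_1^2$ one finds
\[
v^{\mathrm{imag}}(x_1, x_2, t) = -\frac{1}{(2\pi)^2} \int_{\mathbb{R}} \int_{-k_1^2}^{\infty} e^{ik_1 x_1 - \sqrt{\tau+k_1^2}\, x_2 + i\tau t} \frac{\widehat g(k_1, \tau)}{\sqrt{\tau+k_1^2} - \gamma} \, d\tau \, dk_1.
\]
I would extend this to $x_2 \in \mathbb{R}$ by even reflection, replacing $e^{-\ell x_2}$ by $e^{-\ell|x_2|}$, and use the identity $\widehat{e^{-\ell|\cdot|}}(\xi) = 2\ell/(\ell^2 + \xi^2)$ to compute the full spatial Fourier transform of the extension. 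The $H^s(\mathbb{R}^2)$ norm is then controlled by Cauchy--Schwarz in $\tau$ against the weight $w(k_1, \tau) = (1+|\tau+k_1^2|^2)^{(2s-1)/4} + (1+k_1^2)^s (1+|\tau+k_1^2|^2)^{-1/4}$, after which the $\xi$-integration is performed explicitly using identities such as $\int_{\mathbb{R}} (1+\xi^2)^s (\mu+\xi^2)^{-2} d\xi \lesssim \mu^{s-3/2} + \mu^{-3/2}$ valid in the $s$-range of interest, thereby reducing the problem to a one-dimensional integral in $\mu = \tau + k_1^2$ that must be uniformly bounded in $k_1$.

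The main obstacle will be the estimate for $v^{\mathrm{imag}}$: the exponents $(2s-1)/4$ and $-1/4$ in the Bourgain norms are calibrated to make the $\mu$-integral converge both near $\mu = 0^+$ (where the symbol $1/(\sqrt{\mu} - \gamma)$ is smooth thanks to $\gamma \neq 0$ but the $x_2$-Fourier integral contributes $\mu^{-3/2}$) and as $\mu \to \infty$ (where the exponential decay of $e^{-\sqrt{\mu}\, x_2}$ yields only $\mu^{s-3/2}$ of gain); matching these two regimes precisely against the two Bourgain weights will require careful pigeonholing in $\mu$. For $\gamma > 0$, the small-arc term around $k_2 = i\gamma$ produces a residue-type contribution involving $\widehat g(k_1, -k_1^2 + \gamma^2)$ that is absorbed via a Sobolev-trace estimate on $g$.
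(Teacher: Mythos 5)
Your decomposition of the contour and your treatment of the oscillatory piece $v^{\mathrm{real}}$ match the paper's proof of this theorem. However, there are two genuine gaps in your plan for the remaining pieces. First, for $v^{\mathrm{imag}}$ the step ``Cauchy--Schwarz in $\tau$ against the weight $w(k_1,\tau)$, then integrate in $\xi$'' does not close: writing $\mu=\tau+k_1^2$, the quantity you must bound is
\[
\int_{\xi\in\mathbb R}\big(1+k_1^2+\xi^2\big)^s\int_{\mu=0}^\infty\frac{4\mu}{(\mu+\xi^2)^2(\sqrt\mu-\gamma)^2\,w(k_1,\mu)}\,d\mu\, d\xi
\simeq
\int_{\mu=0}^\infty\frac{(1+k_1^2)^s\mu^{-1/2}+\mu^{s-1/2}}{(\sqrt\mu-\gamma)^2\,w(k_1,\mu)}\,d\mu ,
\]
and for $\mu\gtrsim 1+k_1^2$ the integrand is comparable to $\mu^{-1}$, so the integral diverges logarithmically no matter how you pigeonhole. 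The estimate itself is true (the kernel $2\ell/(\ell^2+\xi^2)$ is homogeneous of degree $-1$ and the associated operator is bounded on weighted $L^2$ for $s<\tfrac32$ by Schur's test with test function $u^{-1/2}$), but a Cauchy--Schwarz against a $\xi$-independent weight cannot see this; you need either the Schur/Hardy--Littlewood--P\'olya argument or, as the paper does, to avoid extending $v_1$ altogether and instead estimate $\|\p_x^\mu v_1\|_{L^2(\mathbb R_{x_2}^+)}$ directly via the $L^2(\mathbb R^+)\to L^2(\mathbb R^+)$ boundedness of the Laplace transform, supplemented by the Gagliardo seminorm for the fractional part. Note also that an even reflection is only an $H^s$-bounded extension for $s<\tfrac32$, whereas the theorem is stated for all $s\geqslant 0$; the paper's route has no such cap.

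Second, your handling of the singularity at $k_2=i\gamma$ when $\gamma>0$ is not viable as described. The contour $\p\widetilde D$ detours around the pole along a semicircle of \emph{fixed} radius $\gamma/2$; there is no residue, and if you shrink the arc the remaining principal-value integral on the imaginary axis is not absolutely convergent. More importantly, a ``residue-type'' term $\widehat g(k_1,-k_1^2+\gamma^2)$, or any pointwise evaluation of $\widehat g(k_1,\cdot)$, cannot be controlled by the norms $X^{0,\frac{2s-1}{4}}$ and $X^{s,-\frac14}$: a trace in $\tau$ requires temporal regularity $b>\tfrac12$, while here $b=\tfrac{2s-1}{4}<\tfrac12$ (for the relevant $s<\tfrac32$) and $b=-\tfrac14<0$. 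The paper instead exploits the compact support of $g$ in $t$: it inserts a multiplier $\widehat\psi(-k_2^2)\simeq 1$ on the arc with $\psi(t)=e^{-t}\chi_{(0,2)}(t)$, uses the convolution theorem and Cauchy--Schwarz to convert the point value on the arc into an $L^2_\tau$ average against $(1+(\tau+k_1^2)^2)^{-1}$, and bounds the result by $\|g\|_{X^{s,-1}}\leqslant\|g\|_{X^{s,-\frac14}}$. Some device of this kind is essential and is missing from your outline.
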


In the remaining of this section, we prove Theorem \ref{2d-rnls-pure-t}. We start from the case $\gamma<0$, for which we provide the proof in detail, and continue to the case $\gamma>0$, for which we give the modifications required due to the presence of the simple pole at $i\gamma$ along the positive imaginary $k_2$-axis.

\vskip 2mm
\noindent
\textbf{Proof of Theorem \ref{2d-rnls-pure-t} for $\gamma < 0$.}
Parametrizing the contour $\mathcal C = \p D$ (see Figure \ref{2d-rnls-dplus-f}), we write   $v = v_1 + v_2$ with 
\begin{align}
&\!\!v_1(x_1, x_2, t)
=
-\frac{1}{(2\pi)^2}
\int_{k_1\in\mathbb R}
\int_{k_2=0}^\infty
e^{ik_1x_1-k_2x_2-i(k_1^2-k_2^2)t}
\, \frac{2k_2}{k_2-\gamma} \, \widehat g(k_1, -k_1^2+k_2^2) \, dk_2 dk_1,
\label{2d-v1-def}
\\
&\!\!v_2(x_1, x_2, t)
=
-\frac{i}{(2\pi)^2}
\int_{k_1\in\mathbb R}\int_{k_2=0}^\infty
e^{ik_1 x_1+ik_2x_2-i(k_1^2+k_2^2)t}
 \frac{2k_2}{k_2-i\gamma} \, \widehat g( k_1, -k_1^2-k_2^2) \, dk_2 dk_1,
\label{2d-v2-def}
\end{align}
and estimate $v_1$ and $v_2$  individually. 
We begin with $v_2$, which involves a purely oscillatory exponential and hence can be handled as a globally defined function via Plancherel's theorem for the Fourier transform. Then, we proceed to $v_1$, which  does not make sense for $x_2<0$ and hence requires a different treatment via the $L^2$ boundedness of the Laplace transform.
\vskip 3mm
\noindent
\textit{Estimation of $v_2$.}
Since $v_2$ makes sense for all $(x_1, x_2) \in \mathbb R^2$, by the definition of the $H^s(\mathbb R^2)$ norm and the fact that $(1+k_1^2+k_2^2)^s \lesssim \big(1+k_1^2\big)^s + (k_2^2)^s$ for any $s\in\mathbb R$, we have
\begin{align}
\left\|v_2(t)\right\|_{H^s(\mathbb R_{x_1}\times\mathbb R_{x_2}^+)}^2 
&\leqslant 
\int_{k_1\in\mathbb R}
\int_{k_2 = 0}^\infty \big(1+k_1^2+k_2^2\big)^s \dfrac{4k_2^2}{\left|k_2-i\gamma\right|^2} \, 
\big| \widehat g(k_1, -k_1^2-k_2^2)\big|^2 dk_2 dk_1
\nn\\
&\lesssim
\int_{k_1\in\mathbb R}
\int_{k_2 = 0}^\infty 
\Big[
\big(1+k_1^2\big)^s +(k_2^2)^s
\Big]  
\frac{k_2^2}{k_2^2+\gamma^2} \, 
\big| \widehat g(k_1, -k_1^2-k_2^2)\big|^2 dk_2 dk_1.
\nn
\end{align}
Furthermore, making the change of variable $k_2 = \big(-\tau-k_1^2\big)^{\frac 12}$ and breaking the resulting $\tau$ integral near and away from $-k_1^2$,  we obtain
\begin{subequations}\label{2d-rnls-pure-temp2}
\begin{align}
\left\|v_2(t)\right\|_{H^s(\mathbb R_{x_1}\times\mathbb R_{x_2}^+)}^2 
&\lesssim
\int_{k_1\in\mathbb R}
\int_{\tau = -\infty}^{-1-k_1^2} 
\Big[
\big(1+k_1^2\big)^s + \big|\tau+k_1^2\big|^s 
\Big]
\frac{\big|\tau+k_1^2\big|^{\frac 12} \, \big| \widehat g(k_1,\tau)\big|^2}{\big|\tau+k_1^2\big| + \gamma^2} \, 
 d\tau dk_1
\label{nls-hl-v2-split-1}
\\
&\hspace*{-0.5cm}
+\int_{k_1\in\mathbb R}
\int_{\tau = -1-k_1^2}^{-k_1^2} 
\Big[
\big(1+k_1^2\big)^s + \big|\tau+k_1^2\big|^s 
\Big]
   \frac{\big|\tau+k_1^2\big|^{\frac 12} \, \big| \widehat g(k_1,\tau)\big|^2}{\big|\tau+k_1^2\big| + \gamma^2} \, 
\big|  d\tau dk_1.
\label{nls-hl-v2-split-0}
\end{align}
\end{subequations}
For $\big|\tau+k_1^2\big| \geqslant 1$,  we have 
$
\frac{\big|\tau+k_1^2\big|^{\frac 12}}{\big|\tau+k_1^2\big| + \gamma^2}
\lesssim 
\big(1+\big|\tau+k_1^2\big|\big)^{-\frac 12}
$
and
$
\big|\tau+k_1^2\big|^s \lesssim \big(1+\big|\tau+k_1^2\big|\big)^s
$, $s\in \mathbb R$.
Thus, noting also that
$1+\big|\tau+k_1^2\big| \simeq \big(1+\big|\tau+k_1^2\big|^2\big)^{\frac 12}$,
the first of the above integrals becomes
\begin{align}\label{nls-hl-v2-far}
\eqref{nls-hl-v2-split-1}
&\lesssim
\int_{k_1\in\mathbb R} \big(1+k_1^2\big)^s \int_{\tau=-\infty}^{-1-k_1^2}  \left(1+\big|\tau+k_1^2\big|^2\right)^{-\frac 14}
 \big| \widehat g(k_1, \tau)\big|^2 d\tau dk_1
\nn\\
&\quad
+\int_{k_1\in\mathbb R}  \int_{\tau=-\infty}^{-1-k_1^2}  \left(1+\big|\tau+k_1^2\big|^2\right)^{\frac{2s-1}{4}}
 \big| \widehat g(k_1, \tau)\big|^2 d\tau dk_1, \quad s \in \mathbb R. 
\end{align}
For  $\big|\tau+k_1^2\big|\leqslant 1$ and $\gamma \neq 0$, we have  
\begin{equation}\label{1/gamma}
\big(|\tau+k_1^2| + \gamma^2\big)^{-1}
 \leqslant 
\max\big\{\tfrac{1}{\gamma^2}, 1\big\} 
\big(1 + |\tau+k_1^2| \big)^{-1}
\end{equation}
(this is not true for $\gamma=0$, which is why the  Neumann problem is treated separately in Section~\ref{neumann-s}) and
$
\big|\tau+k_1^2\big|^{s+\frac 12} \leqslant \big(1+\big|\tau+k_1^2\big|\big)^{s+\frac 12}
$
for $s\geqslant -\tfrac 12$.
Thus,  since 
$
1+\big|\tau+k_1^2\big| \simeq \big(1+\big|\tau+k_1^2\big|^2\big)^{\frac 12}
$, 
for $s\geqslant -\frac 12$ we obtain
\begin{align}\label{nls-hl-v2-near}
\eqref{nls-hl-v2-split-0}
&\lesssim
\int_{k_1\in\mathbb R} \big(1+k_1^2\big)^s
\int_{\tau = -1-k_1^2}^{-k_1^2}   
\left(1+\big|\tau+k_1^2\big|^2\right)^{-\frac 14}
\,
 \big| \widehat g(k_1, \tau)\big|^2  d\tau dk_1
\nn\\
&\quad
+
\int_{k_1\in\mathbb R}
\int_{\tau = -1-k_1^2}^{-k_1^2}\left(1+\big|\tau+k_1^2\big|^2\right)^{\frac{2s-1}{4}}
\,
 \big| \widehat g(k_1, \tau)\big|^2     d\tau dk_1.
\end{align}
Combining estimates \eqref{nls-hl-v2-far} and \eqref{nls-hl-v2-near},  for  $s\geqslant -\frac 12$, $\gamma<0$ and $t\in [0, 2]$  we deduce
\begin{align}\label{2d-nls-v2-se-r}
\left\| v_2(t) \right\|_{H^s(\mathbb R_{x_1} \times \mathbb R_{x_2}^+)}^2
&\lesssim
\int_{k_1\in\mathbb R} \big(1+k_1^2\big)^s
\int_{\tau = -\infty}^{-k_1^2}   
\left(1+\big|\tau+k_1^2\big|^2\right)^{-\frac 14}
\,
 \big| \widehat g(k_1, \tau)\big|^2  d\tau dk_1
\nn\\
&
+
\int_{k_1\in\mathbb R}
\int_{\tau = -\infty}^{-k_1^2}\left(1+\big|\tau+k_1^2\big|^2\right)^{\frac s2-\frac 14}
\,
 \big| \widehat g(k_1, \tau)\big|^2     d\tau dk_1.
\end{align}

\noindent
\textit{Estimation of $v_1$.} 
As the expression \eqref{2d-v1-def} for $v_1$ only makes sense for $x_2>0$, we will estimate it by employing the definition of the Sobolev norm in terms of derivatives in $L^2$. In particular, restricting $s\geqslant 0$ we have
\begin{equation}\label{2d-nls-hs-l2-frac}
\left\|v_1(t)\right\|_{H^s(\mathbb R_{x_1}\times \mathbb R_{x_2}^+)}^2
=
\sum_{|\mu|\leqslant \left\lfloor s\right\rfloor}
\left\|\p_x^\mu v_1(t)\right\|_{L^2(\mathbb R_{x_1}\times \mathbb R_{x_2}^+)}^2
+
\sum_{|\mu|=\left\lfloor s\right\rfloor}
\left\|\p_x^{\mu} v_1(t)\right\|_\beta^2 
\end{equation}
where for $x=(x_1, x_2)$, $\mu = (\mu_1, \mu_2)$ we denote $\p_x^\mu = \p_{x_1}^{\mu_1}\p_{x_2}^{\mu_2}$, $|\mu| = \mu_1+\mu_2$, and  for $\beta=s-\left\lfloor s \right\rfloor\in (0, 1)$ we define the fractional norm
\begin{equation}\label{2d-frac-norm-def}
\left\|v_1(t)\right\|_\beta^2
=
\int_{x\in\mathbb R \times \mathbb R^+} 
\int_{y\in\mathbb R \times \mathbb R^+} 
\frac{\left|v_1(x, t)  - v_1(y, t) \right|^2}{\left|x-y\right|^{2(1+\beta)}}
\, dy dx.
\end{equation}

We begin with the integer part of the norm \eqref{2d-nls-hs-l2-frac} and, more specifically, with  $\left\|\p_x^\mu v_1(t)\right\|_{L^2(\mathbb R_{x_1}\times \mathbb R_{x_2}^+)}$ for   $|\mu| = \mu_1+\mu_2 \in\mathbb N \cup \{0\}$ and $|\mu|\leqslant \left\lfloor s \right\rfloor$. 
Differentiating the unified transform expression \eqref{2d-v1-def} for $v_1$, we find
\begin{equation}%\label{2d-alpha-v1-r}
\p_x^\mu v_1(x_1, x_2, t) 
\simeq
\int_{k_1\in\mathbb R} \int_{k_2=0}^\infty
e^{ik_1x_1-k_2x_2-i(k_1^2-k_2^2)t}
\, k_1^{\mu_1}  k_2^{\mu_2} \, \frac{k_2}{k_2-\gamma} \, 
\, \widehat g(k_1, -k_1^2 + k_2^2) dk_2 dk_1.
\nn
\end{equation}
Hence, Plancherel's theorem between the $x_1$ and $k_1$ integrals implies
$$
\left\|\p_x^\mu v_1(t)\right\|_{L^2(\mathbb R_{x_1}\times \mathbb R_{x_2}^+)}^2
\simeq
 \int_{k_1\in\mathbb R}
(k_1^2)^{\mu_1}
\bigg\|
\int_{k_2=0}^\infty \! e^{-k_2x_2 + ik_2^2 t} \, 
 \frac{ k_2^{\mu_2+1}}{k_2-\gamma} \, \widehat g(k_1, -k_1^2 + k_2^2) dk_2
\bigg\|_{L^2(\mathbb R_{x_2}^+)}^2
\!\!\!\! dk_1.
$$
Identifying the $k_2$ integral as the Laplace transform of the function
$
e^{ik_2^2 t} \, k_2^{\mu_2} \, \frac{k_2}{k_2-\gamma} \, \widehat g(k_1, -k_1^2 + k_2^2),
$
we estimate the $L^2$ norm of that integral by using the fact (see  \cite{h1929} and Lemma 3.2 in \cite{fhm2017}) that the Laplace transform 
$
\mathcal L: \varphi(k)  \mapsto  \displaystyle \int_{k=0}^{\infty} e^{-kx} \varphi(k) dk
$
is bounded from $L^2(\mathbb R^+)$ into $L^2(\mathbb R^+)$ with
$
\left\|\mathcal L\left\{\varphi\right\}\right\|_{L^2(\mathbb R^+)}
\leqslant
\sqrt{\pi} \left\|\varphi\right\|_{L^2(\mathbb R^+)}$.
Thus, we obtain 
\begin{equation*}
\left\|\p_x^\mu v_1(t)\right\|_{L^2(\mathbb R_{x_1}\times \mathbb R_{x_2}^+)}^2
\lesssim
 \int_{k_1\in\mathbb R}
(k_1^2)^{\mu_1}
\left\|
e^{ik_2^2 t} \, k_2^{\mu_2} \, \frac{k_2}{k_2-\gamma} \, \widehat g(k_1, -k_1^2 + k_2^2) 
\right\|_{L^2(\mathbb R_{k_2}^+)}^2
dk_1
\end{equation*}
and, since
$
\left(k_2-\gamma\right)^2 \geqslant k_2^2 + \gamma^2
$
for 
$k_2\geqslant 0$ and $\gamma\leqslant 0$ 
(note that this is not true when $\gamma>0$),  
\begin{align}
\left\|\p_x^\mu v_1(t)\right\|_{L^2(\mathbb R_{x_1}\times \mathbb R_{x_2}^+)}^2
&\lesssim
\int_{k_1\in\mathbb R} 
\int_{k_2=0}^\infty
(k_1^2)^{\mu_1}(k_2^2)^{\mu_2}
\frac{k_2^2}{k_2^2+\gamma^2} 
\,
\big|
\widehat g(k_1, -k_1^2 + k_2^2)
\big|^2
dk_2 dk_1.
\label{comp+}
\end{align}
Inserting this estimate in the integer part of the Sobolev norm \eqref{2d-nls-hs-l2-frac} and recalling that $|\mu| = \mu_1+\mu_2 \in\mathbb N \cup \{0\}$ and $|\mu|\leqslant \left\lfloor s \right\rfloor$, we find
\begin{equation}\label{2d-nls-v1-temp-r}
\hspace*{-3mm}
\sum_{|\mu|\leqslant \left\lfloor s \right \rfloor} 
\left\|\p_x^\mu v_1(t)\right\|_{L^2(\mathbb R_{x_1}\times \mathbb R_{x_2}^+)}^2
\lesssim
\int_{k_1\in\mathbb R} 
\int_{k_2=0}^\infty
\!\!\!\!
\frac{\big(1+k_1^2+k_2^2\big)^{\left\lfloor s \right\rfloor}
k_2^2 \, \big|
\widehat g(k_1, -k_1^2 + k_2^2)
\big|^2}{k_2^2+\gamma^2} 
\, 
dk_2 dk_1,
\end{equation}
where we have used the binomial theorem twice in order to compute the two sums over $\mu_2$ and $|\mu|$. 
Noting that $(1 + k_1^2 + k_2^2)^{\left\lfloor s \right\rfloor} \simeq  (1 + k_1^2)^{\left\lfloor s \right\rfloor} + (k_2^2)^{\left\lfloor s \right\rfloor}$ and making the change of variable $k_2=\big(\tau+k_1^2\big)^{\frac 12}$, we  handle the right-hand side of \eqref{2d-nls-v1-temp-r}  similarly to \eqref{2d-rnls-pure-temp2} for $v_2$, i.e. by  splitting the range of the $\tau$ integral near and away from $-k_1^2$.  Eventually, this yields
\begin{align}\label{2d-nls-v1-int-est-r}
\sum_{|\mu|\leqslant \left\lfloor s \right \rfloor} 
\left\|\p_x^\mu v_1(t)\right\|_{L^2(\mathbb R_{x_1}\times \mathbb R_{x_2}^+)}^2
&\lesssim
\int_{k_1\in\mathbb R}
\int_{\tau =-k_1^2}^\infty
 \big(1+k_1^2\big)^{\left\lfloor s \right\rfloor}  
\left(1+\big|\tau+k_1^2\big|^2\right)^{-\frac 14}
\,
 \big| \widehat g(k_1, \tau)\big|^2  d\tau dk_1
\nn\\
&\quad
+
\int_{k_1\in\mathbb R}
\int_{\tau =-k_1^2}^\infty
\left(1 + \big|\tau+k_1^2\big|^2\right)^{\frac{2\left\lfloor s \right\rfloor-1}{4}} 
 \big| \widehat g(k_1, \tau)\big|^2     d\tau dk_1
 \end{align}
for all $s\geqslant 0$ and $\gamma<0$ (as for $v_2$, the Neumann case $\gamma = 0$ is treated separately in Section \ref{neumann-s}).

Having completed the estimation of the integer part of the Sobolev norm \eqref{2d-nls-hs-l2-frac}, we turn our attention to the fractional norms $\left\|\p_x^\mu v_1(t)\right\|_\beta$ with $\mu_1+\mu_2=\left\lfloor s \right\rfloor\in\mathbb N \cup \{0\}$. Note that   \eqref{2d-frac-norm-def}  can be expressed in the convenient form
$
\left\|v_1(t)\right\|_\beta^2
\simeq
\int_{x\in\mathbb R \times \mathbb R^+}\int_{z\in\mathbb R \times \mathbb R^+} \frac{\left|v_1(x+z, t)- v_1(x, t) \right|^2}{\left|z\right|^{2(1+\beta)}}\, dz dx
$.
Then, differentiating \eqref{2d-v1-def} and employing Plancherel's theorem for the integrals with respect to $x_1$ and $k_1$ as well as (once again) the Laplace transform boundedness in $L^2(\mathbb R^+)$ for the  integrals with respect to  $x_2$ and $k_2$, we obtain
\begin{equation}\label{2d-i-integral-r}
\left\|\p_x^\mu v_1(t)\right\|_\beta^2
\lesssim
\int_{k_1\in\mathbb R} (k_1^2)^{\mu_1}
\int_{k_2=0}^\infty
\Big| k_2^{\mu_2} \frac{k_2}{k_2-\gamma} \, \widehat g(k_1, -k_1^2 + k_2^2)\Big|^2
I(k_1, k_2, \beta)
dk_2 dk_1,
\end{equation}
where  
$I(k_1, k_2, \beta)
=
\int_{z_1\in \mathbb R}\int_{z_2=0}^\infty 
\frac{\left|e^{ik_1 z_1 - k_2 z_2}-1\right|^2}
{\left(z_1^2+z_2^2\right)^{1+\beta}}
\,
dz_2dz_1$.
By Lemma 2.2 of \cite{hm2020}, for $\beta\in (0, 1)$ we have 
$I(k_1, k_2, \beta)
\lesssim
\big(k_1^2+k_2^2\big)^\beta$.
Hence, \eqref{2d-i-integral-r} becomes
\begin{equation} 
\left\|\p_x^\mu v_1(t)\right\|_\beta^2
\lesssim
\int_{k_1\in\mathbb R} (k_1^2)^{\mu_1}
\int_{k_2=0}^\infty  \big(k_1^2+k_2^2\big)^\beta 
\, \Big| k_2^{\mu_2} \frac{k_2}{k_2-\gamma} \, \widehat g(k_1, -k_1^2 + k_2^2) \Big|^2 dk_2  dk_1
\nn
\end{equation}
and, using   the  inequality $
\left(k_2-\gamma\right)^2 \geqslant k_2^2 + \gamma^2
$  together with the binomial theorem, we find
\begin{equation}
\sum_{|\mu|=\left\lfloor s \right \rfloor}
\left\|\p_x^\mu v_1(t)\right\|_\beta^2
\lesssim
\int_{k_1\in\mathbb R} 
\int_{k_2=0}^\infty  \big(k_1^2+k_2^2\big)^s
\frac{k_2^2}{k_2^2+\gamma^2}
\, \big|\widehat g(k_1, -k_1^2 + k_2^2) \big|^2 dk_2  dk_1.
\nn
\end{equation}
The term on the right-hand side  can be handled like the one in \eqref{2d-nls-v1-temp-r} to yield
\begin{align}\label{2d-nls-v1-frac-est-r}
\sum_{|\mu|=\left\lfloor s \right \rfloor}
\left\|\p_x^\mu v_1(t)\right\|_\beta^2
&\lesssim
\int_{k_1\in\mathbb R} 
\int_{\tau=-k_1^2}^\infty
\big(1+k_1^2\big)^s
\left(1+\big|\tau+k_1^2\big|^2\right)^{-\frac 14}
\left|
\widehat g(k_1, \tau) 
\right|^2
d\tau dk_1  
\\
&\quad
+
\int_{k_1\in\mathbb R} 
\int_{\tau=-k_1^2}^\infty
\left(1+\big|\tau+k_1^2\big|^2\right)^{\frac{2s-1}{4}}
\left|
\widehat g(k_1, \tau)
\right|^2
d\tau dk_1,  \quad s\geqslant 0, \ \gamma<0.
\nn
\end{align}
Overall, estimates \eqref{2d-nls-v1-int-est-r} and \eqref{2d-nls-v1-frac-est-r} combined with the Sobolev norm definition \eqref{2d-nls-hs-l2-frac}  imply
\begin{align}\label{2d-nls-v1-se-r}
\left\| v_1(t) \right\|_{H^s(\mathbb R_{x_1}\times \mathbb R_{x_2}^+)}^2
&\lesssim
\int_{k_1\in\mathbb R} 
\int_{\tau=-k_1^2}^\infty
\big(1+k_1^2\big)^s
\left(1+\big|\tau+k_1^2\big|^2\right)^{-\frac 14}
\left|
\widehat g(k_1, \tau)
\right|^2
d\tau dk_1  
\nn\\
&\quad
+
\int_{k_1\in\mathbb R} 
\int_{\tau=-k_1^2}^\infty
\left(1+\big|\tau+k_1^2\big|^2\right)^{\frac{2s-1}{4}}
\left|
\widehat g(k_1, \tau)
\right|^2
d\tau dk_1
\end{align}
for all $s\geqslant 0$, $\gamma<0$ and $t\in [0, 2]$, which together with estimate \eqref{2d-nls-v2-se-r} for $v_2$ 
yields the desired Hadamard estimate \eqref{2d-rnls-pure-se} in the case $\gamma<0$.

\vskip 2mm
\noindent
\textbf{Proof of Theorem \ref{2d-rnls-pure-t} for $\gamma >0$.} 
We now provide the modifications necessary for proving estimate~\eqref{2d-rnls-pure-se} when $\gamma>0$. 
Recall that the difference between the cases $\gamma<0$ and $\gamma>0$ is that, in the latter case, the complex contour of integration $\mathcal C$ in the unified transform formula \eqref{2d-nls-pure-sol} is given by $\p \widetilde D$ instead of $\p D$ (see Figure~\ref{2d-rnls-dplus-f}), so that  the singularity at $k_2 = i\gamma$ (which for $\gamma>0$ lies along   $\p D$) is avoided by means of $C_{\gamma/2}(i\gamma)$, which denotes the right half of the negatively oriented circle of radius $\gamma/2$ and center at $i\gamma$.
Hence, for $\gamma>0$, the solution of the reduced pure linear ibvp~\eqref{2d-rnls-pure} consists of three parts,
$v  = v_{1, 1} + v_{1, 2} + v_2$, where
\begin{align}
&v_{1, 1}(x_1, x_2, t)
\simeq
\int_{k_1\in\mathbb R}
\left(\int_{k_2=0}^{\frac \gamma 2} + \int_{k_2=\frac {3\gamma}2}^\infty\right)
e^{ik_1x_1-k_2x_2-i(k_1^2-k_2^2)t}
 \frac{k_2}{k_2-\gamma} \, \widehat g(k_1, -k_1^2+k_2^2)  dk_2 dk_1,
\label{2d-v11-def}
\\
&v_{1, 2}(x_1, x_2, t)
\simeq
\int_{k_1\in\mathbb R}\int_{k_2\in C_{\gamma/2}(i\gamma)}
 e^{ik_1 x_1+ik_2x_2-i(k_1^2+k_2^2)t}
 \frac{k_2}{k_2-i\gamma} \, \widehat g( k_1, -k_1^2-k_2^2) \, dk_2 dk_1,
\label{2d-v12-def}
\\
&v_2(x_1, x_2, t)
\simeq
\int_{k_1\in\mathbb R}\int_{k_2=0}^\infty
e^{ik_1 x_1+ik_2x_2-i(k_1^2+k_2^2)t}
\, \frac{k_2}{k_2-i\gamma} \, \widehat g( k_1, -k_1^2-k_2^2) \, dk_2 dk_1.
\label{2d-v22-def}
\end{align}

The term \eqref{2d-v22-def} can be estimated exactly like the term \eqref{2d-v2-def} from the case $\gamma<0$. Also, the first integral in  \eqref{2d-v11-def} is similar to the term \eqref{2d-v1-def}. Indeed,  since $k_2\in [0, \gamma/2]$, we have  
$
(k_2-\gamma)^2 
\geqslant
\frac 18 \big(k_2^2+\gamma^2\big).
$
This inequality plays the role of inequality 
$
\left(k_2-\gamma\right)^2 \geqslant k_2^2 + \gamma^2
$
that was valid for $k_2\geqslant 0$ and $\gamma\leqslant 0$, and hence allows us to handle the first integral in  \eqref{2d-v11-def} just like $v_1$ from the case $\gamma<0$,  eventually obtaining  estimate \eqref{comp+} and, in turn, estimate~\eqref{2d-nls-v1-int-est-r}.
For the second integral in~\eqref{2d-v11-def}, since $|k_2-i\gamma|\geqslant \gamma/2$ we have  
$
\frac{k_2^2}{(k_2-\gamma)^2}
\leqslant 
2 + \frac{2\gamma^2}{(k_2-\gamma)^2}
\leqslant 10$. 
We remark that this crude bound can be used for the integral over $k_2\in[3\gamma/2, \infty)$ but not for the one over  $k_2\in[0, \gamma/2]$. This is because, in the former range, we can handle the term $|\tau+k_1^2|^{-\frac 12}$  arising from the change of variable $k_2 = (-\tau-k_1^2)$ via the inequality
$|\tau+k_1^2|^{-\frac 12}
\lesssim
\max\big\{\tfrac{1}{\gamma}, 1\big\} \big(1+\big|\tau+k_1^2\big|^2\big)^{-\frac 14}$,
which follows from the fact that $k_2 \geqslant  3\gamma/2$. 
Overall, we conclude that the term  \eqref{2d-v11-def} admits the same estimate with the term $v_1$ appearing in the case $\gamma<0$.

The remaining component $v_{1,2}$ of the reduced pure ibvp solution, which is defined by \eqref{2d-v12-def}, has not appeared before, since it involves for the first time the semicircular contour $C_{\gamma/2}(i\gamma)$. This is a finite contour that stays a fixed distance $\gamma/2$ away from the singularity at $k_2=i\gamma$ and so, in principle, the estimation of $v_{1,2}$ should go through without any issues. However, it turns out that some technical details are needed. The main reason for this is that the changes of variable $\tau = -k_1^2 \pm k_2^2$, which have been used in all of the previous estimations, would now result in complex values for $\tau$, thus making it difficult to relate the relevant  integrals to the Bourgain norms~\eqref{xsb-def}. Thus, instead of making these changes of variables, we will exploit the boundedness of the contour $C_{\gamma/2}(i\gamma)$ together with the compact support in $t$ of the reduced pure linear ibvp datum $g(x_1, t)$ in order to estimate $v_{1,2}$ in a different way.
More specifically, parametrizing $C_{\gamma/2}(i\gamma)$ by $k_2=k_2(\theta) = i\gamma + \frac \gamma 2 e^{i\theta}$, we have
\begin{equation*}%\label{v12-par}
v_{1, 2}(x_1, x_2, t)
\simeq
\int_{k_1\in\mathbb R}\int_{\theta=-\frac \pi2}^{\frac \pi2}
e^{ik_1 x_1+ik_2(\theta)x_2-i(k_1^2+k_2(\theta)^2)t}
\,  k_2(\theta) \, \widehat g(k_1, -k_1^2-k_2(\theta)^2) \,  d\theta dk_1.
\end{equation*}
We will estimate this integral by using the norm \eqref{2d-nls-hs-l2-frac}. 
For the integer part of that norm, we have $\p_x^\mu = \p_{x_1}^{\mu_1}\p_{x_2}^{\mu_2}$ with $|\mu|=\mu_1+\mu_2\in \mathbb N\cup \{0\}$ and $|\mu|\leqslant \left\lfloor s \right\rfloor$. Thus,  differentiating the above expression for $v_{1, 2}$ and then using Plancherel's theorem in $x_1$ and $k_1$, Minkowski's integral inequality for the $x_2$ and $\theta$ integrals, and the fact that
$
\int_{x_2=0}^\infty \big|e^{ik_2(\theta)x_2}\big|^2 dx_2
=
\left[\gamma \left(2+\sin \theta\right)\right]^{-1} = c_\gamma < \infty,
$
we can bound $\left\|\p_x^\mu v_{1, 2}(t)\right\|_{L^2(\mathbb R_{x_1}\times \mathbb R_{x_2}^+)}^2$ by
\begin{align}\label{conv-w-0}
\int_{k_1\in\mathbb R}
(k_1^2)^{\mu_1}
\bigg[
\int_{\theta=-\frac \pi2}^{\frac \pi2}
\left|
e^{-ik_2(\theta)^2t}
k_2(\theta)^{\mu_2+1} \, \widehat g(k_1, -k_1^2-k_2(\theta)^2)
\right| \left(2+\sin\theta\right)^{-\frac 12} d\theta \bigg]^2   dk_1.
\end{align}
Since $0\leqslant \mu_1\leqslant \lfloor s \rfloor$, the right-hand side of the above inequality will have to be controlled by the norm of the Bourgain space $X^{s, -\frac 14}$ (as opposed to $X^{0, \frac{2s-1}{4}}$). In order to accomplish this, we will introduce a multiplier $\simeq 1$ in the above $\theta$ integral. More specifically, let  
$$
\psi(t) = \left\{\begin{array}{ll} e^{-t}, &t\in (0, 2) \\ 0, &t\in [0, 2]^c \end{array}\right\}
\ \Rightarrow \
\widehat \psi(\tau) = \frac{1-e^{-2(1+i\tau)}}{1+i\tau}
$$
and observe that, for $k_2 \in C_{\gamma/2}(i\gamma)$, we have
$\big|\widehat\psi(-k_2^2)\big|
=
\big|
\frac{1-e^{2(ik_2^2-1)}}{1-ik_2^2}
\big|^2
\simeq c_\gamma \simeq 1
$.
Indeed, since
$\text{Im}(k_2^2)\geqslant 0$ along $C_{\gamma/2}(i\gamma)$,  it follows that $\big|1-e^{2(ik_2^2-1)}\big|\simeq 1$. 
Moreover, $\big|1-ik_2^2\big| = c_\gamma \simeq 1$
since 
$\big(1+\frac{\gamma^4}{16}\big)^{\frac 12} \leqslant \big|1-ik_2^2\big| \leqslant  1+ \frac{9\gamma^2}{4}$.
Therefore,
\begin{align}\label{px-nst}
\eqref{conv-w-0}
&\lesssim
\int_{k_1\in\mathbb R}
(k_1^2)^{\mu_1}
\!
\bigg[
\int_{\theta=-\frac \pi2}^{\frac \pi2}
\left|
e^{-ik_2(\theta)^2t}
k_2(\theta)^{\mu_2+1} \, \widehat\psi(-k_2(\theta)^2) \cdot  \widehat g(k_1, -k_1^2-k_2(\theta)^2)
\right| 
\left(2+\sin\theta\right)^{-\frac 12} d\theta \bigg]^2 \!  \!dk_1
\nn\\
&\lesssim
\int_{k_1\in\mathbb R}
(k_1^2)^{\mu_1}
\sup_{\theta\in[-\frac \pi 2, \frac \pi 2]}
\left|
\widehat\psi(-k_2(\theta)^2) \cdot  \widehat g(k_1, -k_1^2-k_2(\theta)^2)
\right|^2 dk_1,
\end{align}
since the $\theta$ integral remaining after taking the above supremum is bounded by some $c_{s, \gamma} < \infty$. 

The supremum in \eqref{px-nst} can be bounded after noting that, by the convolution property of the Fourier transform and the compact support in $t$ of both $\psi$ and $g$,  
$$
\widehat \psi(-k_2^2)\cdot \widehat g(k_1, -k_1^2-k_2^2)
=
\int_{t=0}^4
e^{i(k_1^2+k_2^2)t} \big\{ e^{-ik_1^2 \cdot} \psi(\cdot) * \widehat g^{x_1}(k_1, \cdot)\big\} (t) dt.
$$
Hence, by  the Cauchy-Schwarz inequality, the fact that $\text{Im}(k_2^2)\geqslant 0$ along $C_{\gamma/2}(i\gamma)$, and Plancherel's theorem, we obtain
$$
\left|
\widehat \psi(-k_2^2)\cdot \widehat g(k_1, -k_1^2-k_2^2)
\right|
\lesssim
\left\| \mathcal F_t \big\{e^{-ik_1^2 t} \psi(t)\big\}(\tau) \cdot \widehat g(k_1, \tau)\right\|_{L^2(\mathbb R_\tau)}.
$$
Then, substituting
$
\mathcal F_t \big\{e^{-ik_1^2 t} \psi(t)\big\}(\tau)
=
\frac{1-e^{-2(1+i(\tau+k_1^2))}}{1+i(\tau+k_1^2)}
$
and noting that for $\tau, k_1 \in \mathbb R$ we have
$
\big|1-e^{-2(1+i(\tau+k_1^2))}\big|
\leqslant 1 + e^{-2}
$,
we find
\begin{equation*}%\label{mult-6}
\left|
\widehat \psi(-k_2^2)\cdot \widehat g(k_1, -k_1^2-k_2^2)
\right|^2
\lesssim
\int_{\tau\in\mathbb R}
\left(1+\big(\tau+k_1^2\big)^2\right)^{-1} \left|\widehat g(k_1, \tau)\right|^2 d\tau.
\end{equation*}
Inserting this estimate in  \eqref{px-nst}, we deduce
\begin{align}\label{frac-pos-int-1}
\left\|\p_x^\mu v_{1, 2}(t)\right\|_{L^2(\mathbb R_{x_1}\times \mathbb R_{x_2}^+)}^2
&\lesssim
\int_{k_1\in\mathbb R}
(k_1^2)^{\mu_1}
\int_{\tau\in\mathbb R}
\left(1+\big(\tau+k_1^2\big)^2\right)^{-1} \left|\widehat g(k_1, \tau)\right|^2 d\tau dk_1
\nn\\
&\simeq
\left\|g\right\|_{X^{\mu_1, -1}}^2
\leqslant
\left\|g\right\|_{X^{\left\lfloor s \right\rfloor, -1}}^2
\leqslant
\left\|g\right\|_{X^{s, -\frac 14}}^2
\end{align}
for all $t\in (0, 2)$ and $|\mu|=\mu_1 + \mu_2 \in \mathbb N\cup\{0\}$ with $|\mu| \leqslant \left\lfloor s \right\rfloor$, 
which gives the desired estimate for the integer part of the norm \eqref{2d-nls-hs-l2-frac}.

The fractional  norm $\left\|\p_x^\mu v_{1, 2}(t)\right\|_\beta$   also satisfies \eqref{frac-pos-int-1} for each $|\mu| = \mu_1 + \mu_2 = \left\lfloor s \right \rfloor \in \mathbb N \cup \{0\}$. This can be shown via the same steps with the integer part  above together with the bound
$$
\int_{z_1\in\mathbb R} \int_{z_2=0}^\infty
\frac{\big|e^{ik_1 z_1+ik_2 z_2}-1\big|^2}{\left(z_1^2+z_2^2\right)^{1+\beta}}\, 
dz_2  dz_1 \lesssim \max\big\{1, (k_1^2)^\beta\big\},
$$
which holds for all $k_1\in\mathbb R$, $k_2 \in C_{\gamma/2}(i\gamma)$ and $\beta \in (0, 1)$.
This bound  can be proved similarly to Lemma~2.2 in \cite{hm2020} by considering the cases $|k_1|\geqslant 1$ and $|k_1|\leqslant 1$  separately and then employing the inequality 
$\left|e^z-1\right| \leqslant  |z| e^{|z|}$, $z\in\mathbb C$. 
This completes the proof of Theorem \ref{2d-rnls-pure-t} for $\gamma>0$.~\hfill $\square$

\section{Estimates for the linear Schr\"odinger equation on the plane}
\label{ivp-s}

In this section, we establish various estimates for the linear Schr\"odinger initial value problem (ivp) on the plane. These results will be combined in Section \ref{fls-ibvp-s} with those of Section \ref{2d-rnls-pure-s} on the reduced pure linear ibvp in order to prove Theorem \ref{2d-rfls-t} for the forced linear ibvp \eqref{2d-rfls-ibvp}, which is the basis for proving the well-posedness Theorem \ref{2d-rnls-t} for the NLS ibvp \eqref{2d-rnls-ibvp}. In particular, besides the Hadamard solution space, we will obtain estimates for the linear ivp in the Bourgain-type spaces \eqref{xsbt-def} motivated by Theorem \ref{2d-rnls-pure-t}.
 
We begin with the homogeneous ivp
\begin{equation}\label{2d-rls-ivp}
\begin{aligned}
&i U _t+  U_{x_1x_1} + U_{x_2x_2}  =0,  \quad x = (x_1, x_2) \in \mathbb R^2, \ t \in \mathbb R,
\\
&U (x, 0)= U_0(x)\in H^s(\mathbb R^2),
\end{aligned}
\end{equation}
whose solution is given by
\begin{equation}\label{2d-nls-ls-ivp-sol}
U (x, t) 
= 
S\big[U_0; 0\big] (x, t) 
= 
\frac{1}{(2\pi)^2} 
\int_{k\in\mathbb R^2} e^{ik\cdot x-i k^2 t} \,  \widehat U_0(k) dk,
\end{equation}
where $\widehat U_0$ denotes the Fourier transform of $U_0$ on the whole plane, $\widehat U_0(k) 
= 
\int_{x \in \mathbb R^2} e^{-ik \cdot x}\, U_0(x) dx$,
with $k = (k_1, k_2) \in \mathbb R^2$,  $k \cdot x = k_1x_1+k_2x_2$ and $k^2 = k_1^2 + k_2^2$.
\begin{theorem}[\textcolor{blue}{Homogeneous linear ivp}]
\label{2d-rls-ivp-t}
The solution $U=S\big[U_0; 0\big]$ to the linear Schr\"odinger ivp~\eqref{2d-rls-ivp}, given by the Fourier transform formula \eqref{2d-nls-ls-ivp-sol}, satisfies the   isometry relation
\begin{equation}\label{2d-rls-ivp-se}
\sup_{t\in [0, T]}
\left\|U(t)\right\|_{H^s(\mathbb R_x^2)} 
=
\left\|U_0\right\|_{H^s(\mathbb R_x^2)}, \quad s\in\mathbb R,
\end{equation}
as well as the Bourgain-type estimates
\begin{alignat}{2}
&\sup_{x_2\in\mathbb R}
\left\|\left(U_{x_2} + \gamma U\right)(x_2)\right\|_{X_T^{0, \frac{2s-1}{4}}} 
&\leqslant c_s \left\|U_0\right\|_{H^s(\mathbb R_x^2)},  \quad s\geqslant 1,
\label{2d-rls-te}
\\
&\sup_{x_2\in\mathbb R}
\left\|\left(U_{x_2} + \gamma U\right)(x_2)\right\|_{X_T^{s, -\frac 14}} 
&\leqslant 
c_{s, \gamma} \left\|U_0\right\|_{H^s(\mathbb R_x^2)}, \quad    s\geqslant 0. 
\label{2d-rls-te-2}
\end{alignat}
\end{theorem}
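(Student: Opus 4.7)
The plan is to establish the three inequalities in turn. The isometry \eqref{2d-rls-ivp-se} is immediate from Plancherel's theorem: since \eqref{2d-nls-ls-ivp-sol} gives $\widehat U(k, t) = e^{-ik^2 t}\widehat{U_0}(k)$ with $|e^{-ik^2 t}| = 1$, one has
\begin{equation*}
\|U(t)\|_{H^s(\mathbb R^2)}^2 = \int_{\mathbb R^2}(1+|k|^2)^s|\widehat{U_0}(k)|^2\,dk = \|U_0\|_{H^s(\mathbb R^2)}^2
\end{equation*}
for every $t \in \mathbb R$, so the supremum over $[0,T]$ equals the right-hand side.

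For the two Bourgain-type estimates \eqref{2d-rls-te}--\eqref{2d-rls-te-2}, the key observation is that the factor $e^{ik_1^2 t}$ built into the definition \eqref{xsbt-def} of $X_T^{\sigma, b}$ exactly cancels the $k_1^2 t$ phase coming from the Schr\"odinger evolution. Indeed, differentiating \eqref{2d-nls-ls-ivp-sol} in $x_2$, applying $\mathcal F_{x_1}$, and multiplying by $e^{ik_1^2 t}$ yields
\begin{equation*}
e^{ik_1^2 t}\,\mathcal F_{x_1}\!\bigl[(U_{x_2}+\gamma U)\bigr](k_1, x_2, t) = (\partial_{x_2}+\gamma)\,e^{it\partial_{x_2}^2}v_0^{k_1}(x_2),
\end{equation*}
where $v_0^{k_1}(x_2) := \widehat{U_0}^{x_1}(k_1, x_2)$ is the $x_1$-Fourier transform of $U_0$ viewed as a 1D function of $x_2$. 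Since the right-hand side's time dependence is independent of $k_1$, the $X_T^{\sigma, b}$ norm decomposes as
\begin{equation*}
\|(U_{x_2}+\gamma U)(x_2)\|_{X_T^{\sigma, b}}^2 = \int_{\mathbb R}(1+k_1^2)^\sigma\,\bigl\|(\partial_{x_2}+\gamma)\,e^{it\partial_{x_2}^2}v_0^{k_1}(x_2)\bigr\|_{H^b(0,T)}^2\,dk_1,
\end{equation*}
uniformly in $x_2$. Thus the 2D estimate reduces to a 1D Kato-type trace estimate, after which integration in $k_1$ and comparison with $\|U_0\|_{H^s(\mathbb R^2)}^2$ closes the bound.

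Specifically, for \eqref{2d-rls-te} it suffices to show $\sup_{x_2}\|(\partial_{x_2}+\gamma)e^{it\partial_{x_2}^2}w(x_2)\|_{H^{(2s-1)/4}(0,T)} \lesssim \|w\|_{H^s(\mathbb R)}$, from which $(1+k_2^2)^s\leqslant (1+k_1^2+k_2^2)^s$ gives the conclusion; for \eqref{2d-rls-te-2} the analogous bound with $H^{-1/4}(0,T)$ and $\|w\|_{L^2(\mathbb R)}$ suffices, via $(1+k_1^2)^s\leqslant (1+k_1^2+k_2^2)^s$. I would split $(\partial_{x_2}+\gamma) = \partial_{x_2}+\gamma\,\mathrm{Id}$. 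The derivative piece $\partial_{x_2}e^{it\partial_{x_2}^2}w(x_2)$ can be estimated directly in $H^b(\mathbb R_t)$ without any cutoff: the change of variables $\tau=-k_2^2$ in the time Fourier transform combines the factor $ik_2$ with the Jacobian $1/(2|k_2|)$, yielding a bounded pointwise expression and leading to $\|\partial_{x_2}e^{it\partial_{x_2}^2}w(x_2)\|_{H^b(\mathbb R_t)}^2 \lesssim \int_{\mathbb R}(1+k_2^4)^b|k_2|\,|\widehat w(k_2)|^2\,dk_2$, which is controlled by $\|w\|_{H^{s'}(\mathbb R)}^2$ for both relevant $(s',b)$ pairs by elementary algebra.

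The main obstacle is the zeroth-order piece $\gamma\,e^{it\partial_{x_2}^2}w(x_2)$ in the Robin case $\gamma\neq 0$: without a compensating factor of $k_2$, the Jacobian produces a non-integrable $\gamma^2/|\tau|$ singularity near $\tau=0$ in the global Fourier calculation, so the trace is not in $H^b(\mathbb R_t)$ for generic data. To bypass this, I would use the cutoff bound $\|\cdot\|_{H^b(0,T)}\leqslant \|\psi\cdot\|_{H^b(\mathbb R)}$ with $\psi\in C_c^\infty(\mathbb R)$ satisfying $\psi\equiv 1$ on $[0,T]$, and the convolution identity $\mathcal F_t[\psi(t)\,e^{-ik_2^2 t}](\tau) = \widehat\psi(\tau+k_2^2)$ then replaces the singular distribution by a Schwartz kernel concentrated on the parabola $\tau=-k_2^2$. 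The estimate closes via the auxiliary kernel bound $\int_\tau(1+\tau^2)^b|\widehat\psi(\tau+k_2^2)|^2\,d\tau \lesssim (1+k_2^4)^b$ (valid for $|b|<1/2$, which covers both $b=(2s-1)/4$ when $1\leqslant s<3/2$ and $b=-1/4$) together with a Cauchy--Schwarz in $k_2$ with a suitable frequency weight, closely paralleling the calculations carried out in the proof of Theorem~\ref{2d-rnls-pure-t}.
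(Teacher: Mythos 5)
Your proposal is correct, and the computations you indicate do close: the reduction of the $X_T^{\sigma,b}$ norm to a one-dimensional trace estimate for $(\partial_{x_2}+\gamma)e^{it\partial_{x_2}^2}$ applied to $v_0^{k_1}$ is exactly the right structural observation, the Jacobian cancellation $ik_2\,dk_2=-\tfrac{i}{2}\,d\tau$ handles the derivative piece for both exponent pairs, and the cutoff--convolution--Cauchy--Schwarz scheme (with the weight exponent chosen differently for $b=\tfrac{2s-1}{4}$ and $b=-\tfrac14$) recovers the zeroth-order piece. The route is, however, genuinely different from the paper's. The paper does not reprove the trace estimates from scratch: for \eqref{2d-rls-te} it simply writes $U_{x_2}=S[\partial_{x_2}U_0;0]$ and invokes the already-established estimate $\sup_{x_2}\|U(x_2)\|_{X_T^{0,\frac{2s+1}{4}}}\leqslant c_s\|U_0\|_{H^s}$ from \cite{hm2020} with $s$ replaced by $s-1$ (whence the hypothesis $s\geqslant 1$), and for the $\gamma U$ part of \eqref{2d-rls-te-2} it uses the same external estimate in the form $X_T^{s,\frac14}\hookrightarrow X_T^{s,-\frac14}$; only the $U_{x_2}$ part of \eqref{2d-rls-te-2} is computed directly, via the homogeneous norm $\dot H^{-\frac14}(\mathbb R_t)$ and the same $\tau=-k_2^2$ substitution you use, followed by $\dot H^{-\frac14}\subset H^{-\frac14}$. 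Your approach is self-contained (it effectively re-derives the relevant content of estimate (3.5) of \cite{hm2020}), at the cost of being longer; the paper's is shorter but leans on the earlier Dirichlet paper. One small caveat: your cutoff argument for the zeroth-order term, as you state it, covers $b=\tfrac{2s-1}{4}$ only for $1\leqslant s<\tfrac32$ (the range $|b|<\tfrac12$), whereas \eqref{2d-rls-te} is asserted for all $s\geqslant1$; this is harmless for the paper's purposes, and in fact the kernel bound $\int_\tau(1+\tau^2)^b|\widehat\psi(\tau+k_2^2)|^2\,d\tau\lesssim(1+k_2^4)^b$ holds for every $b$ by Peetre's inequality since $\widehat\psi$ is Schwartz, so the restriction can be removed.
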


\begin{proof}
The isometry relation  \eqref{2d-rls-ivp-se} follows directly from formula \eqref{2d-nls-ls-ivp-sol}  and the definition of the Sobolev norm.
Estimate \eqref{2d-rls-te} can be deduced by estimate (3.5)  in \cite{hm2020}, according to which
$
\sup_{x_2\in \mathbb R} \left\|U(x_2)\right\|_{X_T^{0, \frac{2s+1}{4}}} 
\leqslant c_s \left\|U_0\right\|_{H^s(\mathbb R_x^2)}
$, $s\geqslant 0$.
In particular, observe that $U_{x_2} = S\big[\p_{x_2} U_0; 0\big]$ so it suffices to employ that estimate with $U_{x_2}$ in place of $U$ and with $s$ replaced by $s-1$.

Concerning estimate \eqref{2d-rls-te-2}, we note that (3.5) in \cite{hm2020} additionally implies the estimate
$
\sup_{x_2 \in \mathbb R} \left\| U(x_2) \right\|_{X_T^{s, \frac 14}}
\leqslant
c_s \left\| U_0 \right\|_{H^s(\mathbb R_x^2)}
$
for $s\geqslant 0$, 
which takes care of the second term in the norm on the left-hand side of~\eqref{2d-rls-te-2}.
Furthermore, denoting by $\dot H^{-\frac 14}$  the homogeneous counterpart of the Sobolev space $H^{-\frac 14}$ and using the fact that $\dot H^{-\frac 14} \subset H^{-\frac 14}$, we have
\begin{equation}\label{2d-rnls-hbour-hom-0}
\left\|U_{x_2}(x_2)\right\|_{X_T^{s, -\frac 14}}^2
\leqslant 
\int_{k_1\in\mathbb R}   
\big(1+k_1^2\big)^s 
\left\| e^{ik_1^2\cdot} \, \widehat U_{x_2}^{x_1}(k_1, x_2, \cdot) \right\|_{\dot H^{-\frac 14}(\mathbb R_t)}^2 dk_1.
\end{equation}
Then,  we use the solution formula \eqref{2d-nls-ls-ivp-sol} together with the change of variable $\tau = -k_2^2$ to write
$$
e^{ik_1^2 t} \, \widehat U_{x_2}^{x_1}(k_1, x_2, t)
=
\frac{1}{4i\pi} \int_{\tau=-\infty}^0 e^{i\tau t}
\left[
e^{-i\sqrt{-\tau}x_2}  \,  \widehat U_0(k_1, -\sqrt{-\tau}) 
-
e^{i\sqrt{-\tau}x_2} \, \widehat U_0(k_1, \sqrt{-\tau})
\right] d\tau.
$$
Hence, by the usual definition of the $\dot H^{-\frac 14}(\mathbb R)$ norm, we find
$$
\left\|e^{ik_1^2 \cdot} \, \widehat U_{x_2}^{x_1}(k_1, x_2, \cdot)\right\|_{\dot H^{-\frac 14}(\mathbb R_t)}^2 
\lesssim
 \int_{\tau=-\infty}^0 |\tau|^{-\frac 12} \left[
 \big|\widehat U_0(k_1, -\sqrt{-\tau}) \big|^2
+
\big|\widehat U_0(k_1, \sqrt{-\tau}) \big|^2 
\right] d\tau
$$
and, changing variable from $\tau$ back to $k_2$, we obtain
\begin{equation}%\label{2d-rnls-hdot-est}
\left\|e^{ik_1^2 \cdot} \, \widehat U_{x_2}^{x_1}(k_1, x_2, \cdot)\right\|_{\dot H^{-\frac 14}(\mathbb R_t)}^2 
\lesssim
\int_{k_2\in \mathbb R}  \big|\widehat U_0(k_1, k_2) \big|^2 dk_2.
\nn
\end{equation}
In turn,   \eqref{2d-rnls-hbour-hom-0}  yields
$
\left\|U_{x_2}(x_2)\right\|_{X_T^{s, -\frac 14}}^2
\lesssim
\int_{k_1\in\mathbb R} 
\big(1+k_1^2\big)^s 
 \int_{k_2\in \mathbb R}  \big|\widehat U_0(k_1, k_2) \big|^2 dk_2 dk_1
\leqslant
\left\|U_0\right\|_{H^s(\mathbb R_x^2)}^2
$
with the second inequality due to the fact that $s\geqslant 0$.
\end{proof}

We proceed to the forced linear ivp with zero initial data, 
\begin{subequations}\label{2d-rfls-ivp}
\begin{align}
&i W_t+  W_{x_1x_1} + W_{x_2x_2} = F(x, t),\quad  x = (x_1, x_2) \in \mathbb R^2, \ t \in \mathbb R,
\label{2d-nls-flsivp-eq}
\\
& W(x, 0)=0, \quad  x \in \mathbb R^2, \label{2d-nls-flsivp-ic}
\end{align}
\end{subequations}
whose solution is given by
\begin{equation}\label{2d-nls-fls-ivp-sol}
W(x, t) 
= 
S\big[0; F\big](x, t)
=
-\frac{i}{(2\pi)^2}\int_{t'=0}^t   \int_{k \in\mathbb R^2} e^{ik \cdot x - ik^2(t-t')}
\widehat F^x (k, t') dk dt',
\end{equation}
where  
$\widehat F^x(k, t) 
= 
\int_{x \in \mathbb R^2} e^{-ik \cdot x}\, F(x, t) dx
$  is the spatial Fourier transform of $F$ on the whole plane.

\begin{theorem}[\textcolor{blue}{Forced linear ivp with zero initial data}]
\label{2d-rfls-ivp-t}
The solution $W=S\big[0; F\big]$ to the forced linear Schr\"odinger ivp \eqref{2d-rfls-ivp}, given by the Fourier transform formula \eqref{2d-nls-fls-ivp-sol}, satisfies the Hadamard space estimate
\begin{equation}\label{2d-rfls-ivp-se}
\sup_{t\in [0, T]} \left\|W(t)\right\|_{H^s(\mathbb R_x^2)} 
\leqslant
T \sup_{t\in [0, T]} \left\| F(t) \right\|_{H^s(\mathbb R_x^2)},  \quad s\in\mathbb R,
\end{equation}
and the Bourgain-type estimates
\begin{align}
&\sup_{x_2\in\mathbb R}
\left\|\left(W_{x_2} + \gamma W\right)(x_2)\right\|_{X_T^{0, \frac{2s-1}{4}}} 
\leqslant 
c_s  \sqrt T \sup_{t\in [0, T]} \left\|F(t)\right\|_{H^s(\mathbb R_x^2)},  
\quad 1 \leqslant s \leqslant \tfrac 52, \ s\neq \tfrac 32,
\label{2d-rfls-ivp-te}
\\
&\sup_{x_2\in\mathbb R} \left\|\left(W_{x_2} + \gamma W\right)(x_2)\right\|_{X_T^{s, -\frac 14}}  
\leqslant c_{s, \gamma}  \sqrt T \sup_{t\in [0, T]} \left\| F(t) \right\|_{H^s(\mathbb R_x^2)},  \quad    s \geqslant 0.
\label{2d-rfls-ivp-te-2}
\end{align}
\end{theorem}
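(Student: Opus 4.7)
The plan is to reduce the forced linear ivp \eqref{2d-rfls-ivp} to the homogeneous ivp \eqref{2d-rls-ivp} already controlled by Theorem \ref{2d-rls-ivp-t} via the Duhamel representation
\[
W(x, t) = -i\int_0^t \big[e^{i(t-s)\Delta}F(\cdot, s)\big](x)\, ds,
\]
so that, for each fixed $s\in[0,T]$, the integrand $U^{(s)}(x, t) := \big[e^{i(t-s)\Delta}F(\cdot, s)\big](x)$ solves the homogeneous linear Schr\"odinger ivp with initial datum $F(\cdot, s)\in H^s(\mathbb R^2)$ imposed at time $s$, and is therefore directly controlled by the estimates of Theorem \ref{2d-rls-ivp-t}. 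Each of the three estimates will then follow from a common three-step scheme: (i) commute the target norm with the $s$-integral by Minkowski's integral inequality, (ii) invoke Theorem \ref{2d-rls-ivp-t} pointwise in $s$ to bound the integrand by $\|F(\cdot,s)\|_{H^s(\mathbb R^2_x)}$, and (iii) employ Cauchy-Schwarz in $s\in[0,T]$ in order to replace the crude $L^1_s$ factor of $T$ by the sharper $\sqrt T$.

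For the Hadamard estimate \eqref{2d-rfls-ivp-se}, Minkowski brings $\|\cdot\|_{H^s(\mathbb R^2_x)}$ inside the $s$-integral, the isometry \eqref{2d-rls-ivp-se} replaces each $\|U^{(s)}(t)\|_{H^s}$ by $\|F(s)\|_{H^s}$, and the supremum over $t$ produces the factor $T$. For the negative-index Bourgain estimate \eqref{2d-rfls-ivp-te-2}, I would dualize the $H^{-1/4}(0,T)$-norm against a test function of unit norm in $H^{1/4}(0,T)$, insert Duhamel, interchange the $s$- and $t$-integrations by Fubini, and apply the homogeneous bound \eqref{2d-rls-te-2} from Theorem \ref{2d-rls-ivp-t} to each slice $U^{(s)}$; Cauchy-Schwarz on the resulting $s$-integration then yields the $\sqrt T$ factor, valid for all $s\geqslant 0$. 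Estimate \eqref{2d-rfls-ivp-te} proceeds along the same lines, invoking instead the homogeneous bound \eqref{2d-rls-te} in the space $X_T^{0, (2s-1)/4}$.

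The main obstacle is the positive-index estimate \eqref{2d-rfls-ivp-te} when $s>3/2$: here $b=(2s-1)/4>1/2$, so that the Duhamel cutoff $s\mapsto \chi_{[0,t]}(s)$ (viewed as a function of $t$ for fixed $s$) no longer lies in $H^b(0,T)$ uniformly in $s$, and a naive Minkowski step breaks down at the threshold $b=1/2$. The argument must be rerouted by trading time-regularity for spatial regularity through the equation $iW_t=-\Delta W+F$, which allows the excess fractional time-derivative to be absorbed into two spatial derivatives of $W$ plus a contribution from $F$, upon which the homogeneous estimate can be re-applied with shifted regularity. This mechanism is the source of both the upper cap $s\leqslant 5/2$ (corresponding to $b\leqslant 1$) and the exclusion $s\neq 3/2$ in the statement, and is the technically most delicate point of the proof; once it is in place, the remaining bookkeeping—Minkowski, Theorem \ref{2d-rls-ivp-t}, and Cauchy-Schwarz in $s$—combine uniformly to deliver all three bounds.
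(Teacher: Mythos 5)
Your route is genuinely different from the paper's. The paper does not re-derive anything from Duhamel slices: it simply quotes the estimates (3.21), (3.25) and (3.26) for $W$ itself from \cite{hm2020}, and obtains the bounds for $W_{x_2}$ by the one-line observation that $W_{x_2}=S\big[0;F_{x_2}\big]$, so that (3.25) applied with $s-1$ in place of $s$ immediately yields \eqref{2d-rfls-ivp-te} on the shifted range $1\leqslant s\leqslant\frac52$, $s\neq\frac32$ (the thresholds are inherited, not re-proved). For \eqref{2d-rfls-ivp-te-2} the paper's only new work is the $W_{x_2}$ piece, which it handles by freezing $k_1$, observing that $e^{ik_1^2t}\widehat W_{x_2}^{x_1}(k_1,\cdot,\cdot)$ solves a \emph{one-dimensional} forced Schr\"odinger ivp in $(x_2,t)$, and invoking the $\dot H^{-1/4}_t$--$L^1_t\dot H^{-1}_{x_2}$ bound from Holmer's Lemma 11; the $\dot H^{-1}_{x_2}$ on the right absorbs the $\partial_{x_2}$ and is why no upper cap on $s$ appears there. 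Your superposition scheme, by contrast, leans on Theorem \ref{2d-rls-ivp-t} slice by slice; for \eqref{2d-rfls-ivp-se} and for \eqref{2d-rfls-ivp-te-2} (where $b=-\frac14$, so the moving cutoff $\chi_{\{s<t\}}$ is a bounded multiplier by the Jerison--Kenig result the paper uses elsewhere) this is workable and arguably more self-contained.

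The genuine gap is exactly where you locate it but do not close it: estimate \eqref{2d-rfls-ivp-te} for $\frac32<s\leqslant\frac52$, i.e.\ $b=\frac{2s-1}{4}\in(\frac12,1]$. There the Minkowski-plus-cutoff step fails, and "trading time regularity for spatial regularity through the equation" is only a heuristic --- carrying it out (controlling the excess $H^{b-1}_t$ regularity of $\Delta W$ restricted to a hyperplane, with the correct distribution of derivatives between $k_1$ and $x_2$, and tracking why $b=\frac12$ must be excluded) is precisely the content of estimate (3.25) of \cite{hm2020}, which is the nontrivial lemma your proof would have to reconstruct. As written, the hardest third of the theorem rests on an unproved sketch. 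Two smaller points: your claim that Cauchy--Schwarz in $s$ "sharpens" the factor $T$ to $\sqrt T$ is backwards --- an $L^1_s$ bound against $\sup_s\|F(s)\|_{H^s}$ gives $T$, and $T\leqslant\sqrt T$ only under the standing assumption $T<1$, so the $\sqrt T$ in the statement is not produced this way; and applying Theorem \ref{2d-rls-ivp-t} to a slice with datum imposed at time $s\neq 0$ requires a (routine but unstated) time-translation remark, since $X_T^{\sigma,b}$ is a restriction norm on $(0,T)$.
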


Before giving the proof of the above results, we remark that estimates \eqref{2d-rls-te}, \eqref{2d-rls-te-2}, \eqref{2d-rfls-ivp-te} and~\eqref{2d-rfls-ivp-te-2} along with estimate \eqref{2d-rnls-pure-se}  for the reduced pure linear ibvp  confirm that the space $B_T^s$ defined by \eqref{bst-def} is the correct space for the boundary data $g$ of the forced linear ibvp \eqref{2d-rfls-ibvp} (and hence of the nonlinear ibvp \eqref{2d-rnls-ibvp}).

\begin{proof}
Estimate \eqref{2d-rfls-ivp-se} is (3.21) from \cite{hm2020}.
Estimate \eqref{2d-rfls-ivp-te} follows from  (3.25)   in \cite{hm2020}, which reads
\begin{equation}\label{hm2020-fls-est-0}
\sup_{x_2\in\mathbb R}
\left\|W(x_2)\right\|_{X_T^{0, \frac{2s+1}{4}}} 
\leqslant 
c_s \sqrt T \sup_{t\in [0, T]}  \left\|F(t)\right\|_{H^s(\mathbb R_x^2)}, \quad 
0 \leqslant s \leqslant \tfrac 32, \ s\neq \tfrac 12,
\end{equation}
after noting that $W_{x_2} = S\big[0; F_{x_2}\big]$ and employing \eqref{hm2020-fls-est-0} for $W$ and $W_{x_2}$ with $s-1$ instead of $s$.

Concerning estimate \eqref{2d-rfls-ivp-te-2}, we first note that   (3.26) in  \cite{hm2020}  yields
\begin{equation}\label{W-hm-est}
\sup_{x_2\in\mathbb R}
\left\|W(x_2)\right\|_{X_T^{s, -\frac 14}}
\leqslant
\sup_{x_2\in\mathbb R}
\left\|W(x_2)\right\|_{X_T^{s, \frac 14}}
\lesssim
\sqrt T \sup_{t\in [0, T]} \left\|F(t)\right\|_{H^s(\mathbb R_x^2)}, \quad s\geqslant 0.
\end{equation}
Moreover, as in the proof of Theorem \ref{2d-rls-ivp-t}, we have
\begin{equation}\label{wx2-temp}
\left\|W_{x_2}(x_2)\right\|_{X_T^{s, -\frac 14}}^2 
\leqslant
\int_{k_1\in \mathbb R} \big(1+k_1^2\big)^s \left\|e^{ik_1^2 t} \, \widehat W_{x_2}^{x_1}(k_1, x_2, t)\right\|_{\dot H^{-\frac 14}(0, T)}^2 dk_1.
\end{equation}
For $k_1, x_2 \in \mathbb R$ and $t\in [0, T]$, formula \eqref{2d-nls-fls-ivp-sol} implies $e^{ik_1^2 t} \, \widehat W_{x_2}^{x_1}(k_1, x_2, t) =  R(k_1, x_2, t)$, where
\begin{equation}
R(k_1, x_2, t)
=
-\frac{i}{2\pi} \int_{t'=0}^t \chi_{[0, T]}(t') \int_{k_2\in\mathbb R} e^{ik_2x_2-ik_2^2(t-t')} \,    \widehat{\p_{x_2} F}^x(k_1, k_2, t') dk_2 dt'.
\end{equation}
The important observation is that, for each $k_1\in \mathbb R$,   $R(k_1, x_2, t)$ satisfies the one-dimensional ivp
\begin{equation}\label{1d-fls-ivp}
\begin{aligned}
&iR_t + R_{x_2x_2} =  \chi_{[0,T]}(t)\, \widehat{\p_{x_2} F}^{x_1}(k_1, x_2, t), \quad x_2 \in\mathbb R, \ t \in \mathbb R,
\\
&R(k_1, x_2, 0) = 0, \quad x_2\in\mathbb R.
\end{aligned}
\end{equation}
In this connection, from the proof of Lemma 11 in \cite{h2005}, we have
\begin{equation}\label{holmer-est}
\sup_{x_2\in\mathbb R} \left\|R(k_1, x_2)\right\|_{\dot H^{-\frac 14}(\mathbb R_t)}
\leqslant
c_s \left\|\chi_{[0,T]}(t)\, \widehat{\p_{x_2} F}^{x_1}(k_1, x_2, t)\right\|_{L^1(\mathbb R_t; \dot H^{-1}(\mathbb R_{x_2}))}.
\end{equation}
Combining this bound with \eqref{wx2-temp} and  Minkowski's integral inequality,  we find
$$
\sup_{x_2\in \mathbb R} 
\left\|W_{x_2}(x_2)\right\|_{X_T^{s, -\frac 14}}
\lesssim
\int_{t=0}^T \left(\int_{k_1\in\mathbb R} \big(1+k_1^2\big)^s
 \int_{k_2\in\mathbb R}  \big|\widehat F^x(k_1, k_2, t)\big|^2 dk_2 dk_1\right)^{\frac 12} dt,
$$
which completes the proof of estimate \eqref{2d-rfls-ivp-te-2} in view of  $\big(1+k_1^2\big)^s \leqslant \big(1+k_1^2+k_2^2\big)^s$ for $s\geqslant 0$.
\end{proof}

%
%
%
%%%%%%%%%%%%%%%%%
%
%	The forced linear IBVP
%
%%%%%%%%%%%%%%%%% 
%
%
%
\section{The forced linear ibvp and proof of Theorems \ref{2d-rfls-t} and \ref{2d-rnls-t}}
\label{fls-ibvp-s}

In this section, for $1\leqslant s < \frac 32$ we combine the estimates for the reduced pure linear ibvp  (Theorem~\ref{2d-rnls-pure-t}) and the homogeneous and forced linear ivps (Theorems~\ref{2d-rls-ivp-t} and \ref{2d-rfls-ivp-t}) in order to prove Theorem \ref{2d-rfls-t} for the forced linear ibvp \eqref{2d-rfls-ibvp} and the Hadamard well-posedness Theorem~\ref{2d-rnls-t} for the nonlinear ibvp \eqref{2d-rnls-ibvp}. For this purpose, we decompose ibvp~\eqref{2d-rfls-ibvp} into component problems by using the superposition principle and suitable extensions of the initial and boundary data.
\\[2mm]
\textit{Decomposition into simpler problems.}
Let $U_0\in H^s(\mathbb R_x^2)$ and $F(t)\in H^s(\mathbb R_x^2)$ be, respectively, extensions of the initial datum $u_0 \in H^s(\mathbb R_{x_1}\times \mathbb R_{x_2}^+)$ and the forcing $f(t) \in H^s(\mathbb R_{x_1}\times \mathbb R_{x_2}^+)$  of ibvp~\eqref{2d-rfls-ibvp}  such that
\begin{equation}\label{ic-f-ext}
\left\|U_0\right\|_{H^s(\mathbb R_x^2)} 
\leqslant
2 \left\|u_0\right\|_{H^s(\mathbb R_{x_1}\times \mathbb R_{x_2}^+)},
\quad
\left\|F(t)\right\|_{H^s(\mathbb R_x^2)} 
\leqslant
2 \left\|f(t)\right\|_{H^s(\mathbb R_{x_1}\times \mathbb R_{x_2}^+)}, 
\   t\in [0, T].
\end{equation}
Then, thanks to linearity, we can express the solution $S\big[u_0, g; f\big]$ of  ibvp~\eqref{2d-rfls-ibvp} in the form
\begin{equation}\label{2d-rnls-decomp}
S\big[u_0, g; f\big]
=
S\big[U_0; 0\big]\big|_{x_2>0} + S\big[0; F\big]\big|_{x_2>0}
+ S\big[0, \psi_1; 0\big]
+ S\big[0, \psi_2; 0\big]
\end{equation}
where, for $U_0$ and $F$ as chosen above, 
$S\big[U_0; 0\big]$ solves the homogeneous linear ivp \eqref{2d-rls-ivp}, 
$S\big[0; F\big]$ solves the forced ivp with zero data \eqref{2d-rfls-ivp}
and, for boundary data $\psi_1$ and $\psi_2$ given by
\begin{equation}\label{psi-chi-def}
\begin{aligned}
\psi_1(x_1, t) &= g(x_1, t) - \left(\p_{x_2} S\big[U_0; 0\big] + \gamma    S\big[U_0; 0\big]\right) (x_1, 0, t),
\\
\psi_2(x_1, t) &= -\left(\p_{x_2} S\big[0; F\big] + \gamma S\big[0; F\big]\right)(x_1, 0, t),
\end{aligned}
\end{equation}
and $S\big[0, \psi_1; 0\big]$ and $S\big[0, \psi_2; 0\big]$  solve the \textit{pure linear ibvp} 
\begin{equation}\label{plibvp}
\begin{aligned}
&iu_t+u_{x_1x_1}+u_{x_2x_2} = 0, \quad (x_1, x_2\in \mathbb R \times \mathbb R^+, \ t \in (0, T),
\\
&u(x_1, x_2, 0)= 0, 
\\
&(u_{x_2} + \gamma u)(x_1, 0, t) = \psi(x_1, t),
\end{aligned}
\end{equation}
with $\psi=\psi_1$ and $\psi=\psi_2$ respectively.
Note that, thanks to Theorems \ref{2d-rls-ivp-t} and \ref{2d-rfls-ivp-t} and the extension inequalities \eqref{ic-f-ext}, the boundary data $\psi_1$ and $\psi_2$ belong to the space $B_T^s$ with  
\begin{equation}\label{psi-est-r}
\left\|\psi_1\right\|_{B_T^s}
\lesssim
\left\|u_0\right\|_{H^s(\mathbb R_{x_1}\times \mathbb R_{x_2}^+)} + \left\|g\right\|_{B_T^s}, 
\quad
\left\|\psi_2\right\|_{B_T^s}
\lesssim
\sqrt T \sup_{t\in [0, T]} \left\|f(t)\right\|_{H^s(\mathbb R_{x_1}\times \mathbb R_{x_2}^+)}.
\end{equation}
In view of  \eqref{2d-rls-ivp-se}, \eqref{2d-rfls-ivp-se} and \eqref{ic-f-ext}, the decomposition \eqref{2d-rnls-decomp}  implies
\begin{align}\label{cr-temp11}
\sup_{t\in [0, T]} \left\|S\big[u_0, g; f\big](t)\right\|_{H^s(\mathbb R_{x_1}\times \mathbb R_{x_2}^+)}
&\leqslant
\left\|u_0\right\|_{H^s(\mathbb R_{x_1}\times \mathbb R_{x_2}^+)}
+
T \sup_{t\in [0, T]} \left\|f(t)\right\|_{H^s(\mathbb R_{x_1}\times \mathbb R_{x_2}^+)}
\nn\\
&\hspace*{-4cm}
+
\sup_{t\in [0, T]} \left\|S\big[0, \psi_1; 0\big](t)\right\|_{H^s(\mathbb R_{x_1}\times \mathbb R_{x_2}^+)}
+
\sup_{t\in [0, T]} \left\|S\big[0, \psi_2; 0\big](t)\right\|_{H^s(\mathbb R_{x_1}\times \mathbb R_{x_2}^+)}.
\end{align}
Next, by relating the pure linear ibvp \eqref{plibvp} with the \textit{reduced} pure linear ibvp \eqref{2d-rnls-pure}, we will deduce estimates for the Hadamard norms of $S\big[0, \psi_1; 0\big]$ and $S\big[0, \psi_2; 0\big]$. 
\\[2mm]
\textit{Extension of the boundary data.}
Given $\psi \in B_T^s$, define $h$ via its $x_1$-Fourier transform as
\begin{equation}\label{h-ext-def}
\widehat h^{x_1}(k_1, t) 
=
\left\{\begin{array}{ll}  \widehat \psi^{x_1}(k_1, t), &t\in (0, T), \\ 0, &t\in [0, T]^c, \end{array}\right. 
\quad \text{a.e. } k_1 \in \mathbb R.
\end{equation}
Note that $\text{supp}(h) \subset \mathbb R_{x_1} \times (0, 2)$ since for the purpose of local well-posedness we take $T<1$. 
Also,  as we show below,  $h \in  X^{0, \frac{2s-1}{4}} \cap X^{s, -\frac 14}$. 
Indeed, for each $k_1 \in \mathbb R$ let $\varphi(t)  = e^{ik_1^2t} \, \widehat \psi^{x_1}(k_1, t)$ and observe that $\psi \in B_T^s$ implies  $\varphi \in H^{\frac{2s-1}{4}}(0, T)$ for a.e. $k_1\in\mathbb R$. 
Then, by Theorem~11.4 of \cite{lm1972}, the extension $\Phi_0$ of $\varphi$ by zero outside $(0, T)$  satisfies
\begin{equation}\label{ext-ineq-low}
\left\| \Phi_0 \right\|_{H^{\frac{2s-1}{4}}(\mathbb R_t)}
\leqslant
c_s \left\| \varphi \right\|_{H^{\frac{2s-1}{4}}(0, T)},
\end{equation}
where $c_s = c(s)$ is independent of $\varphi$.
Furthermore, since $\varphi \in H^{-\frac 14}(0, T)$, there exists an extension $\Phi_1 \in H^{-\frac 14}(\mathbb R_t)$ such that 
$$
\left\| \Phi_1 \right\|_{H^{-\frac 14}(\mathbb R_t)} 
\leqslant 
2 \left\| \varphi \right\|_{H^{-\frac 14}(0, T)}.
$$
Then, noting that $\chi_{(0, T)} \Phi_1 = \Phi_0$ and employing Proposition 3.5 of \cite{jk1995} (see also Lemma 4.2 in~\cite{h2006}), we infer
\begin{equation}\label{m-chi-ineq-3}
\left\| \Phi_0 \right\|_{H^{-\frac 14}(\mathbb R_t)}
=
\left\| \chi_{(0, T)} \Phi_1 \right\|_{H^{-\frac 14}(\mathbb R_t)}
\leqslant
c \left\| \Phi_1 \right\|_{H^{-\frac 14}(\mathbb R_t)}
\leqslant 
2c \left\| \varphi \right\|_{H^{-\frac 14}(0, T)}
\end{equation}
for some universal constant $c$.
Therefore, since  $\widehat h^{x_1}(k_1, t) = e^{-ik_1^2t}\, \Phi_0(k_1, t)$, by the definition of the Bourgain norms \eqref{xsb-def} we find
\begin{align}\label{g-ineq}
\left\|h\right\|_{X^{0, \frac{2s-1}{4}}}^2
&=
\int_{k_1\in\mathbb R} \left\|\Phi_0(k_1)\right\|_{H^{\frac{2s-1}{4}}(\mathbb R_t)}^2 dk_1
\lesssim
\int_{k_1\in\mathbb R} \left\|\varphi(k_1)\right\|_{H^{\frac{2s-1}{4}}(0, T)}^2 dk_1
= \left\|\psi\right\|_{X_T^{0, \frac{2s-1}{4}}}^2,
\nn\\
\left\|h\right\|_{X^{s, -\frac 14}}^2
&=
\int_{k_1\in\mathbb R} \big(1+k_1^2\big)^s \left\|\Phi_0(k_1)\right\|_{H^{-\frac 14}(\mathbb R_t)}^2 dk_1
\nn\\
&\lesssim
\int_{k_1\in\mathbb R} \big(1+k_1^2\big)^s \left\|\varphi(k_1)\right\|_{H^{-\frac 14}(0, T)}^2 dk_1
= \left\|\psi\right\|_{X_T^{s, -\frac 14}}^2,
\end{align}
thereby concluding that $h \in X^{0, \frac{2s-1}{4}} \cap X^{s, -\frac 14}$.

\vskip 2mm
\noindent
\textbf{Proof of Theorem \ref{2d-rfls-t}.}
The extension $h$ of $\psi$ defined by \eqref{h-ext-def} meets all of the requirements for serving as boundary datum in the reduced pure linear ibvp \eqref{2d-rnls-pure}. Thus, we employ estimate~\eqref{2d-rnls-pure-se} along with inequalities \eqref{g-ineq} and \eqref{psi-est-r} to deduce, for $\psi = \psi_1$ and $\psi = \psi_2$ respectively,
\begin{equation}
\begin{aligned}
&\sup_{t\in [0, T]} \left\|S\big[0, \psi_1; 0\big](t)\right\|_{H^s(\mathbb R_{x_1}\times \mathbb R_{x_2}^+)}
\lesssim
\left\|u_0\right\|_{H^s(\mathbb R_{x_1}\times \mathbb R_{x_2}^+)} + \left\|g\right\|_{B_T^s},
\\
&\sup_{t\in [0, T]} \left\|S\big[0, \psi_2; 0\big](t)\right\|_{H^s(\mathbb R_{x_1}\times \mathbb R_{x_2}^+)}
\lesssim
\sqrt T
\sup_{t\in [0, T]}
\left\|f(t)\right\|_{H^s(\mathbb R_{x_1}\times \mathbb R_{x_2}^+)}.
\end{aligned}
\end{equation}
This completes the estimation of the two pure linear ibvps  in \eqref{cr-temp11} and hence implies the desired estimate~\eqref{2d-rfls-se-r} of Theorem \ref{2d-rfls-t} for $\gamma \neq 0$. See Section \ref{neumann-s} for the modifications needed when $\gamma=0$.~\hfill $\square$

\vskip 2mm
\noindent
\textbf{Proof of Theorem \ref{2d-rnls-t}.}
Along the lines of the argument presented in \cite{hm2020}, the forced linear ibvp estimate \eqref{2d-rfls-se-r} can be combined with the algebra property in $H^s(\mathbb R_{x_1} \times \mathbb R_{x_2}^+)$ (which is valid for $s>1$) to show that the iteration map 
$\Phi: u \mapsto S\big[u_0, g; \pm |u|^{\alpha-1}u\big]$
is a contraction in a ball inside the Hadamard space $C([0, T^*]; H^s(\mathbb R_{x_1} \times \mathbb R_{x_2}^+))$ for $1<s<\frac 32$ and lifespan $T^*$ given by \eqref{lifespan}. This amounts to local Hadamard well-posedness   for the NLS ibvp \eqref{2d-rnls-ibvp} as stated in Theorem \ref{2d-rnls-t}.~\hfill $\square$

\section{The Neumann Problem}
\label{neumann-s}

In this section, we provide the modifications needed in the case of Neumann data ($\gamma=0$). We begin with the reduced pure linear ibvp and then  proceed to the proof of Theorem \ref{2d-rfls-t}  for the forced linear ibvp when $\gamma=0$. We note that the proof of Theorem \ref{2d-rnls-t} for the nonlinear problem does not require any modifications from the case $\gamma \neq 0$, since it is based solely on Theorem \ref{2d-rfls-t}, which (as we shall show below) holds  for both $\gamma \neq 0$ and $\gamma = 0$. 
\\[2mm]
\textit{The reduced pure linear ibvp.}
In the Neumann case $\gamma=0$, the reduced pure linear ibvp \eqref{2d-rnls-pure} becomes
\begin{equation}\label{2d-rnls-pure-nn}
\begin{aligned}
&iv_t+v_{x_1x_1}+v_{x_2x_2} = 0, \quad (x_1, x_2)\in \mathbb R \times \mathbb R^+, \ t\in (0, 2),
\\
&v(x_1, x_2, 0)= 0, 
\\
&v_{x_2}(x_1, 0, t) = g(x_1, t), \quad \text{supp}(g)\subset \mathbb R_{x_1}\times (0, 2).
\end{aligned}
\end{equation}
For this problem, the unified transform solution formula \eqref{2d-rfls-sol-T} takes the simple form 
\begin{equation}\label{2d-nls-pure-sol-nn}
v(x_1, x_2, t)
=
-\frac{i}{(2\pi)^2}
\int_{k_1\in\mathbb R}\int_{k_2\in\p D}
e^{ik_1x_1+ik_2x_2-i(k_1^2+k_2^2)t}
\, 2 \, \widehat g(k_1, -k_1^2-k_2^2)\, dk_2dk_1,
\end{equation}
where the contour $\p D$ is the boundary of the first quadrant of the complex $k_2$-plane as shown in Figure~\ref{2d-rnls-dplus-f} and $\widehat g$ denotes the Fourier transform \eqref{ghat-def}.
Using formula \eqref{2d-nls-pure-sol-nn}, we will prove the following Neumann analogue of Theorem \ref{2d-rnls-pure-t}.
\begin{theorem}[\textcolor{blue}{Basic linear estimate for the Neumann problem}]
\label{2d-rnls-pure-nn-t}
Let $s\geqslant 0$. Then, the solution $v(x_1, x_2, t)$ of the reduced pure linear ibvp \eqref{2d-rnls-pure-nn}, as given by  formula \eqref{2d-nls-pure-sol-nn}, satisfies the Hadamard estimate
\begin{equation}\label{2d-rnls-pure-se-nn-opt}
\sup_{t\in [0, 2]} \left\| v(t) \right\|_{H^s(\mathbb R_{x_1}\times \mathbb R_{x_2}^+)}
\leqslant
c_s
\big(
\left\|g\right\|_{\dot X^{0, \frac{2s-1}{4}}} 
+
\left\|g\right\|_{\dot X^{s, -\frac 14}}
\big).
\end{equation}
\end{theorem}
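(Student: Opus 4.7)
The plan is to adapt the strategy of Theorem~\ref{2d-rnls-pure-t} (for $\gamma<0$), since in the Neumann case the contour $\mathcal C$ in \eqref{2d-nls-pure-sol-nn} also reduces to $\p D$, and the key structural simplification is that the Robin multiplier $\frac{2k_2}{k_2-i\gamma}$ collapses to the constant $2$. I split $\p D$ into its imaginary-axis and real-axis components and write $v = v_1 + v_2$, where $v_1$ is the integral over $k_2=i\kappa$, $\kappa\in(0,\infty)$, and $v_2$ is the integral over $k_2\in(0,\infty)$. Exactly as in the Robin case, $v_2$ is purely oscillatory and lives naturally on all of $\mathbb R^2$, so I estimate it through Plancherel's theorem; whereas $v_1$ contains the exponential $e^{-\kappa x_2}$ and must be handled through the $L^2(\mathbb R^+)$-boundedness of the Laplace transform.

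For $v_2$, Plancherel bounds $\|v_2(t)\|_{H^s(\mathbb R_{x_1}\times\mathbb R_{x_2}^+)}^2$ by an integral in which the Robin factor $\frac{4k_2^2}{k_2^2+\gamma^2}$ is replaced by the constant $4$. After splitting $(1+k_1^2+k_2^2)^s \lesssim (1+k_1^2)^s + (k_2^2)^s$ and changing variable via $k_2=(-\tau-k_1^2)^{1/2}$, the integrand picks up the Jacobian factor $|\tau+k_1^2|^{-1/2}$. In the Robin case this was combined with the damping $\frac{k_2^2}{k_2^2+\gamma^2}$ to produce, via inequality~\eqref{1/gamma}, the tame weight $(1+|\tau+k_1^2|^2)^{-1/4}$; here, $\gamma=0$ leaves the raw singular weight $|\tau+k_1^2|^{-1/2}$ intact near the curve $\tau=-k_1^2$, which is precisely the homogeneous Bourgain weight underlying $\dot X^{\cdot,-1/4}$. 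This directly produces the desired right-hand side $\|g\|_{\dot X^{0,(2s-1)/4}} + \|g\|_{\dot X^{s,-1/4}}$.

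For $v_1$, I follow the Robin template: differentiate the formula for $v_1$, apply Plancherel in $x_1\leftrightarrow k_1$, and invoke the $L^2(\mathbb R^+)$-boundedness of the Laplace transform to pass from $x_2$ to $\kappa$. Now, the Robin comparison $(k_2-\gamma)^2\geq k_2^2+\gamma^2$ is no longer available (and not needed), and the ratio $\big|\tfrac{k_2}{k_2-\gamma}\big|^2$ is simply $1$. Changing variable $\kappa=(\tau+k_1^2)^{1/2}$ and splitting $(1+k_1^2+k_2^2)^{\lfloor s\rfloor} \simeq (1+k_1^2)^{\lfloor s\rfloor} + (k_2^2)^{\lfloor s\rfloor}$ yields, by the same mechanism as above, the singular Jacobian $|\tau+k_1^2|^{-1/2}$, which again matches the homogeneous Bourgain weight $\dot X^{\cdot,-1/4}$. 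The fractional contribution $\|\p_x^\mu v_1(t)\|_\beta$ with $|\mu|=\lfloor s\rfloor$ is controlled by inserting the bound $I(k_1,k_2,\beta)\lesssim(k_1^2+k_2^2)^\beta$ from Lemma~2.2 of \cite{hm2020} before performing the change of variable, exactly as in the Robin case.

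The main obstacle---and the only substantive novelty compared to Theorem~\ref{2d-rnls-pure-t}---is the unavailability of inequality \eqref{1/gamma} at $\gamma=0$, which is what previously converted the singular Jacobian $|\tau+k_1^2|^{-1/2}$ near $\tau=-k_1^2$ into the tame inhomogeneous weight $(1+|\tau+k_1^2|^2)^{-1/4}$. Its absence forces the low-frequency contribution of the boundary datum to be measured in the homogeneous, scale-invariant norms $\dot X^{0,(2s-1)/4}$ and $\dot X^{s,-1/4}$, as reflected in \eqref{2d-rnls-pure-se-nn-opt}.
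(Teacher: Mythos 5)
Your proposal is correct and follows essentially the same route as the paper: the same decomposition of $\partial D$ into $v_1$ (imaginary axis, handled via Plancherel in $x_1$ plus the $L^2(\mathbb R^+)$-boundedness of the Laplace transform and Lemma~2.2 of \cite{hm2020} for the fractional part) and $v_2$ (real axis, handled via Plancherel), with the same observation that the loss of inequality~\eqref{1/gamma} at $\gamma=0$ leaves the raw Jacobian weight $|\tau+k_1^2|^{-1/2}$ and thus forces the homogeneous Bourgain norms $\dot X^{0,\frac{2s-1}{4}}$ and $\dot X^{s,-\frac 14}$. This is precisely the paper's argument.
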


In the above theorem,   the homogeneous-in-time Bourgain space $\dot X^{\sigma, b}$  is defined via the norm
\begin{equation}\nn%\label{2d-rnls-hbour-def}
\left\|g\right\|_{\dot X^{\sigma, b}}^2
=
\int_{k_1\in\mathbb R} 
\!\! \big(1+k_1^2\big)^\sigma 
\big|\tau+k_1^2\big|^{2b}
\left|\widehat g(k_1, \tau)\right|^2 dk_1
=
\int_{k_1\in\mathbb R} 
\!\! \big(1+k_1^2\big)^\sigma 
\left\|e^{ik_1^2t} \, \widehat g^{x_1}(k_1, t)\right\|_{\dot H^b(\mathbb R_t)}^2 \! dk_1,
\end{equation}
where  $\dot H^b$ denotes the homogeneous Sobolev space. Importantly, we remark that for $s\geqslant \frac 12$ the space $\dot X^{0, \frac{2s-1}{4}}$ \textit{can} be replaced in \eqref{2d-rnls-pure-se-nn-opt} by the nonhomogeneous space $X^{0, \frac{2s-1}{4}}$. However, this is \textit{not} possible for the space $\dot X^{s, -\frac 14}$, since the homogeneous weight is now independent of $s$.

\begin{proof}
Parametrizing  $\p D$ along the positive imaginary and real axes, we  write $v =  v_1 + v_2$
 with 
\begin{align*}
v_1(x_1, x_2, t)
&=
-\frac{1}{(2\pi)^2}
\int_{k_1\in\mathbb R}
\int_{k_2=0}^\infty
e^{ik_1x_1-k_2x_2-i(k_1^2-k_2^2)t}
\,   2 \, \widehat g(k_1, -k_1^2+k_2^2) \, dk_2 dk_1,
\\
v_2(x_1, x_2, t)
&=
-\frac{i}{(2\pi)^2}
\int_{k_1\in\mathbb R}\int_{k_2=0}^\infty
e^{ik_1 x_1+ik_2x_2-i(k_1^2+k_2^2)t}
\,  2 \, \widehat g( k_1, -k_1^2-k_2^2) \, dk_2 dk_1.
\end{align*}
These expressions are similar to \eqref{2d-v1-def} and \eqref{2d-v2-def} for the Robin problem but without the fractions 
$\frac{k_2}{k_2-\gamma}$ and $\frac{k_2}{k_2-i\gamma}$ respectively. Thus, with the exception of  inequality \eqref{1/gamma}, which was the key for transitioning from homogeneous to nonhomogeneous weights in the Robin case, we can  follow the same steps as in the proof of Theorem \ref{2d-rnls-pure-t} to obtain the following analogues of \eqref{2d-nls-v2-se-r} and \eqref{2d-nls-v1-se-r}  
\begin{align*}
\left\|v_2(t)\right\|_{H^s(\mathbb R_{x_1}\times\mathbb R_{x_2}^+)}^2 
&\lesssim
\int_{k_1\in\mathbb R}
\int_{\tau=-\infty}^{-k_1^2}
\big(1+k_1^2\big)^s   
\big|\tau+k_1^2\big|^{-\frac 12}
\big| \widehat g(k_1, \tau)\big|^2   d\tau dk_1
\nn\\
&\quad
+
\int_{k_1\in\mathbb R}
\int_{\tau=-\infty}^{-k_1^2}
\big|\tau+k_1^2\big|^{s-\frac 12}
\big| \widehat g(k_1, \tau)\big|^2 d\tau  dk_1, \quad s\in \mathbb R,
\\
\left\| v_1(t) \right\|_{H^s(\mathbb R_{x_1}\times \mathbb R_{x_2}^+)}^2
&\lesssim
\int_{k_1\in\mathbb R} 
\int_{\tau=-k_1^2}^\infty
\big(1+k_1^2\big)^s
\left(\tau+k_1^2\right)^{-\frac 12}
\left|
\widehat g(k_1, \tau)
\right|^2
d\tau dk_1  
\nn\\
&\quad
+
\int_{k_1\in\mathbb R} 
\int_{\tau=-k_1^2}^\infty
\left(\tau+k_1^2\right)^{s-\frac 12}
\left|
\widehat g(k_1, \tau)
\right|^2
d\tau dk_1, \quad s\geqslant 0.
\end{align*}
These two estimates combine to imply the desired estimate \eqref{2d-rnls-pure-se-nn-opt}.
\end{proof}

\noindent
\textbf{Proof of Theorem \ref{2d-rfls-t} when $\gamma=0$.}
Having established Theorem \ref{2d-rnls-pure-nn-t} for the reduced pure linear ibvp in the Neumann case, we shall now  combine this result with inequality \eqref{cr-temp11} in order to establish estimate~\eqref{2d-rfls-se-r} for the forced linear ibvp \eqref{2d-rfls-ibvp}  when $\gamma=0$ and $1\leqslant s < \frac 32$. Like in the Robin case, this step requires us to define an appropriate extension of the boundary data \eqref{psi-chi-def}, which for $\gamma = 0$ are given by
\begin{equation}\label{psi-chi-def-nn}
\psi_1(x_1, t) = g(x_1, t) -  \p_{x_2} S\big[U_0; 0\big](x_1, 0, t),
\quad
\psi_2(x_1, t) =  - \p_{x_2} S\big[0; F\big](x_1, 0, t).
\end{equation}
Indeed, the remaining two components $S\big[U_0; 0\big]$ and  $S\big[0; F\big]$ that appear in \eqref{2d-rnls-decomp} concern the homogeneous and forced linear ivp respectively and have been estimated in Theorems~\ref{2d-rls-ivp-t} and~\ref{2d-rfls-ivp-t}.

Denoting $\psi_1$ and $\psi_2$ simply by $\psi \in B_T^s$, for each $k_1 \in \mathbb R$ we let  $\varphi(t)  = e^{ik_1^2t} \, \widehat \psi^{x_1}(k_1, t)$ and denote the extension of this function outside $(0, T)$ by $\Phi_0(t)$. 
Observe that, since $\psi \in B_T^s$, we have $\varphi \in H^{\frac{2s-1}{4}}(0, T)$ for a.e. $k_1 \in \mathbb R$. 
Next, define the global function $h(x_1, t)$ via its $x_1$-Fourier transform as
\begin{equation}\label{h-Phi-nn}
\widehat h^{x_1}(k_1, t) 
=
\theta(t) \, e^{-ik_1^2t} \, \Phi_0(k_1, t), \quad \text{a.e. } k_1 \in \mathbb R, \ t\in \mathbb R,
\end{equation}
where $\theta\in C_0^\infty[-1, 3]$ is equal to $1$ on $[0, 2]$ and decays to zero smoothly outside $[0, 2]$. 
Note that  this definition implies $\text{supp}(h) \subset \mathbb R_{x_1} \times (0, 2)$ and $h(t) = \psi(t)$ for $t\in (0, T)$.
Also, as we show next, $h \in X^{0, \frac{2s-1}{4}} \cap X^{s, -\frac 14}$.
Indeed, the fact $\left\| \theta f \right\|_{H^b(\mathbb R)} \leqslant c(\theta, b) \left\| f \right\|_{H^b(\mathbb R)}$, $b\in\mathbb R$, combined with inequalities~\eqref{ext-ineq-low} and~\eqref{m-chi-ineq-3} yields
$$
\left\|\theta \Phi_0(k_1)\right\|_{H^{\frac{2s-1}{4}}(\mathbb R_t)}
\leqslant
c_{s, \theta}
\left\|\varphi(k_1)\right\|_{H^{\frac{2s-1}{4}}(0, T)},
\quad
\left\|\theta \Phi_0(k_1)\right\|_{H^{-\frac 14}(\mathbb R_t)}
\leqslant
c_{s, \theta}
\left\|\varphi(k_1)\right\|_{H^{-\frac14}(0, T)}.
$$
Thus, by the definition of the Bourgain norms, we find
\begin{align}
&\left\|h\right\|_{X^{0, \frac{2s-1}{4}}}^2
=
\int_{k_1\in\mathbb R} \left\|\theta \Phi_0(k_1)\right\|_{H^{\frac{2s-1}{4}}(\mathbb R_t)}^2 dk_1
\lesssim
\int_{k_1\in\mathbb R} \left\|\varphi(k_1)\right\|_{H^{\frac{2s-1}{4}}(0, T)}^2 dk_1
= \left\|\psi\right\|_{X_T^{0, \frac{2s-1}{4}}}^2,
\nn\\
&\left\|h\right\|_{X^{s, -\frac 14}}^2
=
\int_{k_1\in\mathbb R} \big(1+k_1^2\big)^s \left\|\theta \Phi_0(k_1)\right\|_{H^{-\frac 14}(\mathbb R_t)}^2 dk_1
\lesssim
\int_{k_1\in\mathbb R} \big(1+k_1^2\big)^s \left\|\varphi(k_1)\right\|_{H^{-\frac 14}(0, T)}^2 dk_1
= \left\|\psi\right\|_{X_T^{s, -\frac 14}}^2,
\nn
\end{align}
thereby deducing that $h \in X^{0, \frac{2s-1}{4}} \cap X^{s, -\frac 14}$.
Therefore, $h$ can play the role of the boundary datum in the 
reduced pure linear ibvp \eqref{2d-rnls-pure-nn} and so, for $\psi = \psi_1$,   estimate \eqref{2d-rnls-pure-se-nn-opt} yields
\begin{equation}\label{ext-temp1}
\hspace*{-3mm}
\sup_{t\in [0, T]} \left\|S\big[0, \psi_1; 0\big](t)\right\|_{H^s(\mathbb R_{x_1}\times \mathbb R_{x_2}^+)}
\lesssim
\left\| h \right\|_{X^{0, \frac{2s-1}{4}}} + \left\| h \right\|_{\dot X^{s, -\frac 14}}
\lesssim
\left\| \psi_1 \right\|_{X_T^{0, \frac{2s-1}{4}}} + \left\| h \right\|_{\dot X^{s, -\frac 14}}
\end{equation}
after replacing the norm of $\dot X^{0, \frac{2s-1}{4}}$ with the one of $X^{0, \frac{2s-1}{4}}$ (since $s > \frac 12$).
In addition, according to Lemma 2.8 in \cite{ck2002} (see also Lemma 4.1 in \cite{h2006}), for $0\leqslant b < \frac 12$ and any $\theta \in C_0^\infty(\mathbb R)$ we have
$
\left\| \theta f \right\|_{\dot H^{-b}(\mathbb R)}
\leqslant
c(\theta, b) \left\| f \right\|_{H^{-b}(\mathbb R)}.
$
Using this result for $b=\frac 14$, $\theta$ as above and $f = \Phi_0$,  we obtain
$
\left\| \theta \Phi_0(k_1) \right\|_{\dot H^{-\frac 14}(\mathbb R_t)}
\lesssim
\left\| \Phi_0(k_1) \right\|_{H^{-\frac 14}(\mathbb R_t)}
\lesssim
\left\| \varphi(k_1) \right\|_{H^{-\frac 14}(0, T)}$,
where for the last step we have employed inequality \eqref{m-chi-ineq-3}. Hence,
$$
\left\| h \right\|_{\dot X^{s, -\frac 14}}^2
=
\int_{k_1\in\mathbb R} \big(1+k_1^2\big)^s \left\|\theta \Phi_0(k_1)\right\|_{\dot H^{-\frac 14}(\mathbb R_t)}^2 dk_1
\lesssim
\int_{k_1\in\mathbb R} \big(1+k_1^2\big)^s \left\|\varphi(k_1)\right\|_{H^{-\frac 14}(0, T)}^2 dk_1
=
\left\| \psi_1 \right\|_{X_T^{s, -\frac 14}}^2
$$
and, combining \eqref{ext-temp1} with \eqref{psi-est-r} for $\psi_1$, we find
\begin{equation}\nn%\label{psi1-se-nn}
\sup_{t\in [0, T]} \left\|S\big[0, \psi_1; 0\big](t)\right\|_{H^s(\mathbb R_{x_1}\times \mathbb R_{x_2}^+)}
\lesssim
\left\|g\right\|_{B_T^s}
+
\left\|u_0\right\|_{H^s(\mathbb R_{x_1}\times \mathbb R_{x_2}^+)}.
\end{equation}
Along the same lines, we also have
\begin{equation}\nn%\label{psi2-se-nn}
\sup_{t\in [0, T]} \left\|S\big[0, \psi_2; 0\big](t)\right\|_{H^s(\mathbb R_{x_1}\times \mathbb R_{x_2}^+)}
\lesssim
\sqrt T
\sup_{t\in [0, T]}
\left\|f(t)\right\|_{H^s(\mathbb R_{x_1}\times \mathbb R_{x_2}^+)}.
\end{equation}
These estimates along with inequality \eqref{cr-temp11} imply the desired estimate \eqref{2d-rfls-se-r}, completing the proof of Theorem \ref{2d-rfls-t} in the Neumann case $\gamma=0$.~\hfill $\square$

%
%
%
%%%%%%%%%%%%%%%%%%%%%% 
%
%	Solution via the Unified Transform
%
%%%%%%%%%%%%%%%%%%%%%%  
%
%
%
\section{Solution of the linear ibvp via the unified transform of Fokas}
\label{2d-rnls-utm-s}

We provide the derivation of the unified transform formula \eqref{2d-rfls-sol-T} for the forced linear Schr\"odinger ibvp \eqref{2d-rfls-ibvp} under the assumption of sufficiently smooth initial and boundary values. 
Taking the half-plane Fourier transform \eqref{ft-hp-def} of the forced linear Schr\"odinger equation and using the Robin boundary condition along with the notation  $u(x_1, 0, t) = g_0(x_1, t)$, we find
\begin{equation}
\widehat u_t  + i\big(k_1^2+k_2^2\big)  \widehat u 
=
\int_{x_1\in\mathbb R} e^{-ik_1x_1} \left[\left(k_2 + i\gamma\right) g_0(x_1, t) - ig(x_1, t) \right] dx_1 
-
i\widehat f,
\quad k_1 \in \mathbb R, \ \text{Im}(k_2) \leqslant 0.
\nn
\end{equation}
We note that the domain can be extended to the lower half of the complex $k_2$-plane thanks to the fact that $x_2 > 0$. 
Integrating this expression with respect to $t$ yields the so-called \textit{global relation}
\begin{align}\label{2d-rfls-gr}
e^{i(k_1^2+k_2^2)t}\, \widehat u (k_1, k_2, t)
&=
\widehat u_0(k_1, k_2)
+
\left(k_2+i\gamma\right) \widetilde g_0(k_1, k_1^2+k_2^2, t) - i \,\widetilde g(k_1, k_1^2+k_2^2, t)
\nn\\
&\quad
-i\int_{t'=0}^t e^{i(k_1^2+k_2^2)t'} \widehat f(k_1, k_2, t') dt'
\end{align}
with $\widetilde g_0$ and $\widetilde g$ defined according to \eqref{gtil-def}.
Thus, by the inverse Fourier transform we obtain
\begin{align}\label{2d-rfls-ir}
&
u(x_1, x_2, t)
=
\frac{1}{(2\pi)^2} \int_{k_1\in\mathbb R}\int_{k_2\in\mathbb R} e^{ik_1x_1+ik_2x_2-i(k_1^2+k_2^2)t}\, \widehat u_0(k_1, k_2) dk_2 dk_1
\\
& 
-
\frac{i}{(2\pi)^2}\int_{k_1\in\mathbb R}\int_{k_2\in\mathbb R} e^{ik_1x_1+ik_2x_2-i(k_1^2+k_2^2)t} \int_{t'=0}^t e^{i(k_1^2+k_2^2) t'} \widehat f(k_1, k_2, t') dt' dk_2 dk_1
\nn\\
&
+
\frac{1}{(2\pi)^2} \int_{k_1\in\mathbb R}\int_{k_2\in\mathbb R} e^{ik_1x_1+ik_2x_2-i(k_1^2+k_2^2)t} \Big[\left(k_2+i\gamma\right)\widetilde g_0(k_1, k_1^2+k_2^2, t) 
- i \, \widetilde g(k_1, k_1^2+k_2^2,t) \Big] dk_2 dk_1.
\nn
\end{align}

This is an integral representation for the solution but \textit{not} an explicit  formula  since  $\widetilde g_0$ involves the unknown Dirichlet boundary value $g_0$. However, it is possible to eliminate $\widetilde g_0$   from \eqref{2d-rfls-ir} in favor of known quantities.
To accomplish this, we begin by observing that for $x_2\geqslant 0$ and $t\geqslant t'$ the exponential $e^{ik_2x_2-i k_2^2(t-t')}$ is bounded for  $k_2 \in \left\{\text{Im}(k_2)\geqslant 0\right\}\setminus D$, where $D$ here denotes the first quadrant of the complex $k_2$-plane (see Figure \ref{2d-rnls-dplus-f}). Thus, exploiting the analyticity of the half-plane Fourier transform as well as of the transforms $\widetilde g_0$ and $\widetilde g$ for all $k_2\in\mathbb C$ (which follows via a Paley-Wiener theorem), we apply Cauchy's theorem in the second quadrant of the complex $k_2$-plane to deform the contour of the $k_2$ integral in the last term of \eqref{2d-rfls-ir} from $\mathbb R$ to  the positively oriented boundary $\p D$ of  $D$ (see Figure \ref{2d-rnls-dplus-f}). This deformation is possible thanks to the fact that, for the quartercircle 
$
\gamma_\rho^+
=
\big\{\rho e^{i\theta}:  \tfrac \pi 2\leqslant \theta \leqslant \pi\big\}
$,
we can show along the lines of \cite{hm2020} that
$\lim_{\rho \to \infty}
 \int_{k_2\in \gamma_\rho^+} e^{ik_2x_2-ik_2^2t} \big[\left(k_2+i\gamma\right)\widetilde g_0(k_1, k_1^2+k_2^2, t) - i \, \widetilde g(k_1, k_1^2+k_2^2,t) \big] dk_2
= 0$. 
Thus, \eqref{2d-rfls-ir} becomes
\begin{align}
\label{2d-rfls-ir-def}
&u(x_1, x_2, t)
=
\frac{1}{(2\pi)^2} \int_{k_1\in\mathbb R}\int_{k_2\in\mathbb R} e^{ik_1x_1+ik_2x_2-i(k_1^2+k_2^2)t}\, \widehat u_0(k_1, k_2) dk_2 dk_1
\\
& 
-
\frac{i}{(2\pi)^2}\int_{k_1\in\mathbb R}\int_{k_2\in\mathbb R} e^{ik_1x_1+ik_2x_2-i(k_1^2+k_2^2)t} \int_{t'=0}^t e^{i(k_1^2+k_2^2) t'} \widehat f(k_1, k_2, t') dt' dk_2 dk_1
\nn\\
&
+
\frac{1}{(2\pi)^2} \int_{k_1\in\mathbb R}\int_{k_2\in\p D} e^{ik_1x_1+ik_2x_2-i(k_1^2+k_2^2)t} 
\Big[\left(k_2+i\gamma\right)\widetilde g_0(k_1, k_1^2+k_2^2, t) 
- i \, \widetilde g(k_1, k_1^2+k_2^2,t) \Big] dk_2 dk_1.
\nn
\end{align}

Next, note that under the transformation $k_2\mapsto -k_2$  the global relation \eqref{2d-rfls-gr} yields the identity
\begin{align}\label{2d-rfls-gr-}
e^{i(k_1^2+k_2^2)t}\, \widehat u(k_1, -k_2, t)
&=
\widehat u_0(k_1, -k_2)
+
\left(-k_2+i\gamma\right) \widetilde g_0(k_1, k_1^2+k_2^2, t) - i \, \widetilde g(k_1, k_1^2+k_2^2, t)
\nn\\
&\quad
-i\int_{t'=0}^t e^{i(k_1^2+k_2^2)t'} \widehat f(k_1, -k_2, t') dt', \quad k_1 \in \mathbb R, \ \text{Im}(k_2) \geqslant 0.
\end{align}
If $\gamma < 0$, then $k_2-i\gamma \neq 0$ for all $k_2 \in \p D$, so we can use  \eqref{2d-rfls-gr-} to substitute for $\widetilde g_0$ in \eqref{2d-rfls-ir-def} and obtain
\begin{align}\label{2d-rfls-sol-t}
&\quad
u(x_1, x_2, t)
=
\frac{1}{(2\pi)^2} \int_{k_1\in\mathbb R} \int_{k_2\in\mathbb R} e^{ik_1x_1+ik_2x_2-i(k_1^2+k_2^2)t} \, \widehat u_0(k_1, k_2) dk_2 dk_1
\nn\\
&
\quad
+\frac{1}{(2\pi)^2} \int_{k_1\in\mathbb R} \int_{k_2\in\p D}
e^{ik_1x_1+ik_2x_2-i(k_1^2+k_2^2)t} \, \frac{k_2+i\gamma}{k_2-i\gamma} \, \widehat u_0(k_1,-k_2) dk_2 dk_1
\nn\\
&\quad
-\frac{i}{(2\pi)^2}
\int_{k_1\in\mathbb R}
\int_{k_2\in\mathbb R}
e^{ik_1x_1+ik_2x_2-i(k_1^2+k_2^2)t}
\int_{t'=0}^t 
e^{i(k_1^2+k_2^2)t'}\widehat f(k_1, k_2, t')dt'dk_2 dk_1
\nn\\
&
\quad
-\frac{i}{(2\pi)^2}
\int_{k_1\in\mathbb R}
\int_{k_2\in\p D}
e^{ik_1x_1+ik_2x_2-i(k_1^2+k_2^2)t}  \, \frac{k_2+i\gamma}{k_2-i\gamma} \,
\int_{t'=0}^t 
e^{i(k_1^2+k_2^2)t'}\widehat f(k_1, -k_2, t')dt'
dk_2 dk_1
\nn\\
&\quad
-\frac{i}{(2\pi)^2}
\int_{k_1\in\mathbb R}\int_{k_2\in\p D}
e^{ik_1x_1+ik_2x_2-i(k_1^2+k_2^2)t}
 \, \frac{2k_2}{k_2-i\gamma} \, \widetilde g(k_1, k_1^2+k_2^2, t)
dk_2dk_1,
\end{align}
where we have also used the fact that 
$
\int_{k_2\in \p D}e^{ik_2x_2}  \, \frac{k_2+i\gamma}{k_2-i\gamma} \, \widehat u(k_1,-k_2,t)dk_2 = 0
$
by analyticity and exponential decay of the integrand inside $D$. 
Formula \eqref{2d-rfls-sol-t} is true also for $\gamma=0$, since in that case the denominator $k_2-i\gamma$ cancels out.
Moreover, exploiting once again analyticity and exponential decay in $D$, we infer  
$
\int_{k_2\in \p D} e^{ik_2x_2-i k_2^2 t} \int_{t'=t}^T e^{i(k_1^2+k_2^2) t'}  \, \frac{2k_2}{k_2-i\gamma} \int_{y_1\in \mathbb R} e^{-ik_1y_1} \, g (y_1, t') dy_1 dt' dk_2 = 0
$,
which turns \eqref{2d-rfls-sol-t} into the equivalent form \eqref{2d-rfls-sol-T} with $\mathcal C = \p D$.

If $\gamma>0$, then $k_2-i\gamma$ vanishes along the positive imaginary axis, which is part of $\p D$. To avoid crossing this singularity, before using  identity \eqref{2d-rfls-gr-} to solve for $\widetilde g_0$ we locally deform the contour of integration of the last $k_2$ integral in \eqref{2d-rfls-ir} from $\p D$ to the contour $\p \widetilde D$ shown in Figure~\ref{2d-rnls-dplus-f}. Then, proceeding as for $\gamma<0$, we obtain the unified transform formula \eqref{2d-rfls-sol-T}, this time with $\mathcal C = \p \widetilde D$.

%%%%%%%%%%%%%%%%%%%%%%%%  
%
%	Acknowledgements
%
%%%%%%%%%%%%%%%%%%%%%%%%  
%
%
\vspace*{3mm}
\noindent
{\bf Acknowledgements.} This work was partially supported by a grant from the Simons Foundation (\#524469 to Alex Himonas) and a grant from the National Science Foundation (NSF-DMS 2206270 to D. Mantzavinos).

%
%
%%%%%%%%%%%%%%%%%%%%%%%%  
%
%			Bibliography
%
%%%%%%%%%%%%%%%%%%%%%%%%  
%
%
%

\vspace*{5mm}

\begin{center}
\setlength{\tabcolsep}{5mm}
\begin{tabular}{ll}
A. Alexandrou Himonas  &Dionyssios Mantzavinos
\\
Department of Mathematics &Department of Mathematics \\
University of Notre Dame   &University of Kansas
\\
Notre Dame, IN 46556 &Lawrence, KS 66045 \\
\textit{himonas.1$@$nd.edu} & \textit{mantzavinos@ku.edu}
\end{tabular}
\end{center}

\end{document}